\documentclass[11pt]{amsart}
\usepackage{amssymb}
\usepackage{amsthm}

\usepackage{epsfig}
\usepackage{color}
\usepackage{verbatim}
\usepackage{tikz}

\usepackage{amsmath}
\usepackage{hyperref}
\usepackage{xypic}
\usepackage{amscd}
\usepackage{color}

\newfont{\sheaf}{eusm10 scaled\magstep1}

\newtheorem{thm}{Theorem}[section]
\newtheorem{cor}[thm]{Corollary}
\newtheorem{lemma}[thm]{Lemma}
\newtheorem{prop}[thm]{Proposition}
\newtheorem{defn}[thm]{Definition}
\newtheorem{claim}[thm]{Claim}

\newtheorem{proposition}[thm]{Proposition}

\theoremstyle{definition}
\newtheorem{remark}[thm]{Remark}

 \newtheorem{example}[thm]{Example}

\def\min{\operatorname{min}}

\def\max{\operatorname{max}}

\def\c1{\operatorname{c_1}}
\def\c2{\operatorname{c_2}}

\def\Sym{\operatorname{Sym}}

\def\e{\mathbf{e}}

\def\c{\mathbf{c}}

\def\ZZ{{\mathbb Z}}

\def\PP{{\mathbb P}}

\def\M{{\mathcal M}}

\def\H{{\mathcal H}}

\def\cong{\simeq}

\def\+{\oplus}                   
\def\*{\otimes}                  

\def\geq{\geqslant}
\def\leq{\leqslant}

\makeatletter
\@namedef{subjclassname@2020}{\textup{2020} Mathematics Subject Classification}
\makeatother

\begin{document}

\title{Components of simple and non--simple type of Hurwitz schemes}

\author[C.~Ciliberto]{Ciro Ciliberto}
\address{C.~Ciliberto, Dipartimento di Matematica,  Universit\`a di Roma ``Tor Vergata'', Via della Ricerca Scientifica, 00173 Roma, Italy}
\email{cilibert@axp.mat.uniroma2.it}

\author[A.~L.~Knutsen]{Andreas Leopold Knutsen}
\address{A.~L.~Knutsen, Department of Mathematics, University of Bergen,
Postboks 7800,
5020 Bergen, Norway}
\email{andreas.knutsen@math.uib.no}

\author[S.~Torelli]{Sara Torelli}
\address{S.~Torelli,  Dipartimento di Matematica, Politecnico di Milano, Piazza Leonardo da
Vinci 12, 20133 Milano,  Italy}
\email{sara.torelli7@gmail.com}

\begin{abstract}  Let $\H_{g \to b,d; \e}$, with $\e=(e_1,\ldots, e_n)$, be the \emph{Hurwitz space}, parametrizing all morphisms $\pi: C\to B$ of degree $d$, with $n$ points $x_1,\ldots, x_n\in C$ of ramification order  $e_1,\ldots, e_n$ respectively, and where $C$ and $B$ are smooth, irreducible, projective curves of genera $g$ and $b$ respectively. In this  paper  we study the question of when there exist components of $\H_{g \to b,d;\e}$ whose members $\pi: C \to B$ all factor through an intermediate curve, in which case we say that these components are  \emph{of non--simple type}.  We give necessary and sufficient conditions for the existence of  components of non--simple type.  Then we prove  that for $b\geq 2$ there are always
 components of simple type,  and  for $b\in \{0,1\}$ there are such components under suitable sufficient conditions. However there are easy examples for $b\in \{0,1\}$ in which there are never
 components of simple type.  
\end{abstract}

 \maketitle

\section{Introduction}

Consider, for any integers $d \geq 2$, $n,g,b \geq 0$, and any $n$-tuple of integers  $\e:=(e_1, \ldots, e_n)$  such that $2 \leq e_i \leq d$, the {\it Hurwitz space}
\[ \H_{g \to b,d;\e}. \]
 Letting 
$$r=r(g,b,d;\e):=2(g-1-d(b-1))-\displaystyle\sum_{i=1}^n(e_i-1),$$  
  the Hurwitz space  $\H_{g \to b,d;\e}$  is the coarse moduli space parametrizing   triples
\begin{equation}
  \label{eq:triple}
  \{(C,x_1,\ldots,x_n), B, \pi:C \to B\},
\end{equation}
where
\begin{itemize}
\item $C$ is a smooth  irreducible   projective  curve of genus $g$, and $x_1,\ldots, x_n \in C$ are distinct marked points, that is, $(C,x_1,\ldots,x_n) \in \M_{g,n}$;
\item $B$ is a smooth  irreducible    projective  curve of genus $b$,  that is, $B\in \M_b$;
  \item  $\pi:C \to B$ is a morphism of degree $d$ having ramification order $e_i$ at $x_i$, for $1\leq i\leq n$,  and simple ramification at  $r$   points completing the ramification profile. 
\end{itemize}
Sometimes we will use the shorter notation
\begin{equation}
  \label{eq:triples}
  [\pi:(C,x_1,\ldots,x_n) \longrightarrow B] \;\;\mbox{ or} \;\; [\pi:C \to B]
\end{equation}
instead of \eqref{eq:triple} for members of $\H_{g \to b,d;\e}$.

 Note that  $\H_{g \to b,d;\e}$ need not be irreducible. In fact, Theorems \ref{thm:composed}--\ref{thm:simple01} below provide infinitely many examples of reducible cases.

The Riemann-Hurwitz formula yields
\[ 2(g-1)=2d(b-1)+ \displaystyle\sum_{i=1}^n(e_i-1)+r, \]
whence in particular,
\begin{equation}
  \label{eq:RH}
   \displaystyle\sum_{i=1}^n(e_i-1)  \leq 2\left[g-1-d(b-1)\right] \;\;\;  (\mbox{and} \; 2 \leq e_i \leq d \;\; \mbox{for all $i$}). 
\end{equation}
We call \eqref{eq:RH} the {\it Riemann-Hurwitz condition}. Defining
\begin{equation}
  \label{eq:defsigma}
  \sigma(g,b,d;\e):= -\displaystyle\sum_{i=1}^n(e_i-1) -(2d-3)(b-1)-g+1,
\end{equation}
the Riemann-Hurwitz condition can be equivalently stated as
\begin{equation}
  \label{eq:RH_cond_s}
 \sigma(g,b,d;\e) \geq -3(g-b).
\end{equation}
We note that when $b=0$, then $\sigma(g,b,d;\e)=\widetilde{\rho}(g,1,d;\e)$  is    the {\it adjusted Brill-Noether number}  (see \cite {EH}).  

 The Hurwitz moduli space $\H_{g \to b,d;\e}$ is
nonempty if and only if  the Riemann-Hurwitz condition
 \eqref{eq:RH} holds, in which case it is 
equidimensional of dimension $3(g-1)+n+\sigma(g,b,d;\e)$, with a few exceptions, cf. Proposition \ref{prop:dimhur} below.

 It is worth remarking that if $[\pi:(C,x_1,\ldots,x_n) \longrightarrow B]$ is general in  a component of  $\H_{g \to b,d;\e}$, then $\pi(x_1), \ldots, \pi(x_n)$ are distinct points of $B$. This is an immediate consequence of Riemann's Existence Theorem  (cf., e.g., \cite[III, Cor. 4.9-10]{Mi}).

In this  paper  we study the question of when there exist components of $\H_{g \to b,d;\e}$ whose members $C \to B$ all factor through an intermediate curve. 
We say that a member $\pi:C \to B$ of $\H_{g \to b,d;\e}$ is {\it  non--simple}  if $\pi$  factors as
\begin{equation}
  \label{eq:compos}
  \xymatrix{
    C \ar[rr]^{\pi} \ar[dr]_{\phi} &  &  B  \\
   & C' \ar[ur]_{\psi} & 
}\end{equation}
where $C'$ is a smooth   irreducible  projective  curve  and $\phi$ and $\psi$ are not isomorphisms.   In this case one says that $\pi$ is \emph{composed} with $\phi$ and $\psi$ (or  only  composed with $\phi$).  Otherwise, we say that the member is {\it simple}.   Notice that if $\pi:C \to B$ is non--simple, the factorization \eqref {eq:compos}  need  not be unique.  

  We define a component of $\H_{g \to b,d;\e}$ to be {\it of  simple} (respectively, {\it non--simple})  {\it type} if its general member is  simple (resp., non--simple).  More precisely, we say that a component is {\it non--simple of order  $\delta>1$} if $\deg \phi=\delta$ for its general member (and consequently $\deg \psi=\frac{d}{\delta}$).  Note that, by what we observed above, the order of a component of $\H_{g \to b,d;\e}$ may not be unique. This will not be a problem for us. 
  We denote  by $\H_{g \to b,d;\e}^{\delta}$ the union of components of  non-simple type of order $\delta$ of $\H_{g \to b,d;\e}$.  

Our first main result yields necessary and sufficient conditions for the existence of components of non--simple type. To state it we need  the following: 

  \begin{defn} \label{def:adm}
  Given the data $(g,b,d;\e=(e_1,\ldots,e_n))$ satisfying the Riemann-Hurwitz condition \eqref{eq:RH}, we say that  $\delta$   is an {\em admissible factor} of $(g,b,d;\e)$ if
  \begin{itemize}
  \item $\delta$ is  a \emph{proper divisor} of $d$,  i.e.,  $\delta |d$  and $1<\delta<d$, 
  \item for all $i \in\{1,\ldots,n\}$, either $\delta|e_i$ or $e_i <\delta$,
  \item  $\displaystyle\sum_{\substack{\delta|e_i \\ 1<\delta<e_i}} \left(\frac{e_i}{\delta}-1\right)$ is even, and, if $b=0$, also $\geq 2\left(\frac{d}{\delta}-1\right)$.
  \end{itemize}
\end{defn}

  It can be easily verified that the last definition is independent of whether we augment $\e$ by adding some $2$'s corresponding to simple, unmarked ramification points.  We also note that the existence of an admissible factor implies $d \geq 4$.  One also sees that any proper divisor $\delta$ of $d$ such that $e_i<\delta$ for all $i$ is automatically admissible, as the sum in the last point is void.  However such a divisor may not exist.  
  
We will prove:

\begin{thm} \label{thm:composed}
  We have $\H^{\delta}_{g  \to b,d;\e}\neq \emptyset$  if and only if $\delta$ is an admissible factor of $(g,b,d;\e)$.
\end{thm}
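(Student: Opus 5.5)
We prove both directions by analysing the ramification of a composition $\pi=\psi\circ\phi$ with $\deg\phi=\delta$ and $\deg\psi=d/\delta$. The existence direction is then settled by a monodromy construction.

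\emph{Necessity.} Take a component of $\H_{g\to b,d;\e}$ whose general member is composed with $\phi:C\to C'$ of degree $\delta$. By the remark in the introduction, the points $\pi(x_1),\ldots,\pi(x_n)$ are distinct, and $\pi$ has only simple ramification elsewhere. Over each branch point of $\pi$ there is a single non-simple point $x_i$, or a single simple ramification point.

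Fix $p=\pi(x_i)$. The points of $\psi^{-1}(p)$ have orders $f_j$ with $\sum f_j=d/\delta$. Over each such point, $\phi$ has local orders $a_{jk}$ with $\sum_k a_{jk}=\delta$. The ramification orders of $\pi$ over $p$ are therefore the products $f_ja_{jk}$. Since $\pi$ has exactly one ramified point over $p$, all products $f_ja_{jk}$ equal $1$ except one, which equals $e_i$. There are two possibilities:
\begin{itemize}
\item $\psi$ is unramified over $p$. Then the point $x_i$ comes from $\phi$ alone, and $e_i\le\delta$.
\item $\psi$ is ramified over $p$, i.e.\ some $f_j\ge2$. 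Then every point over that $f_j$ has product at least $f_j\ge 2$, so there is exactly one such point, with $a=\delta$ and $e_i=\delta f_j$. Hence $\delta\mid e_i$, and all other $f$'s equal $1$ with $a=1$.
\end{itemize}
In particular, $e_i<\delta$ or $\delta\mid e_i$ (the case $e_i=\delta$ is also consistent). A simple ramification point can only come from $\phi$, since $2=\delta f$ with $f\ge2$ is impossible when $\delta\geq 2$.

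We now count. The branch points of $\psi$ are exactly the $\pi(x_i)$ with $\delta\mid e_i$, $e_i>\delta$, and each is a single point of order $e_i/\delta$. Riemann--Hurwitz for $\psi$ gives
\[
2g(C')-2=\tfrac d\delta(2b-2)+\sum_{\substack{\delta\mid e_i\\ e_i>\delta}}\left(\tfrac{e_i}\delta-1\right).
\]
This forces the sum to be even. When $b=0$, the inequality $g(C')\ge0$ only gives $\sum\ge 2(d/\delta-1)-2$. To get the sharper bound we use that the monodromy group of $\psi$ must be transitive in $S_{d/\delta}$ and generated by cycles of lengths $e_i/\delta$ whose product is $1$. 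By Riemann--Hurwitz, such a cover with only single-cycle branching has $\sum(\text{length}-1)\geq 2(d/\delta-1)$: connectivity of $C'$ alone gives $\ge d/\delta-1$, and parity plus the product-one relation give the factor $2$. This establishes admissibility of $\delta$.

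\emph{Sufficiency.} Given an admissible $\delta$, we construct $\psi:C'\to B$ of degree $d/\delta$ with one point of order $e_i/\delta$ over distinct points $q_i$ for each $i$ with $\delta\mid e_i$ and $e_i>\delta$, and no other ramification. We use Riemann's existence theorem:
\begin{itemize}
\item For $b\ge1$, we choose the cycles freely and use commutators $\prod[\alpha_j,\beta_j]$ to fix the product and ensure transitivity. The parity condition is exactly what is needed for the product of the cycles to lie in $A_{d/\delta}$ when needed, so that it can be written as a product of commutators.
\item For $b=0$, we need cycles of the prescribed lengths in $S_{d/\delta}$, with product $1$, generating a transitive group. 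This is possible under the evenness condition and the inequality $\sum\ge 2(d/\delta-1)$, by the classical realizability results for genus-$0$ targets; we build them inductively, starting from a transposition-type chain.
\end{itemize}
Next, we take $\phi:C\to C'$ of degree $\delta$ that is totally ramified at the preimages of the $q_i$ above. It also has one point of order $e_i$ over further general points of $C'$ for each $e_i\le\delta$, including the $e_i=\delta$ cases not already used, and is otherwise simply ramified with the number of simple points dictated by $g$. Such $\phi$ exists by Riemann's existence theorem for the target $C'$, provided the residual count of simple branch points is nonnegative and has the right parity; the global Riemann--Hurwitz condition \eqref{eq:RH} gives this, with small genus adjustments absorbed by the simple ramification. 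The composition $\psi\circ\phi$ is then a point of $\H_{g\to b,d;\e}$. A dimension count via Proposition \ref{prop:dimhur} shows that the locus of such compositions fills a component. Equivalently, small deformations of a composed cover with this monodromy, namely an imprimitive group preserving a block system, remain composed, because the monodromy is locally constant.

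\emph{Main obstacle.} I expect the hardest step to be the $b=0$ realizability and its converse inequality: showing that cycles of lengths $e_i/\delta$ with product $1$ generating a transitive subgroup exist exactly when the sum is even and at least $2(d/\delta-1)$. A secondary difficulty is showing that the composed locus is a full component rather than lying inside a component of simple type.
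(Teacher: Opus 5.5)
Your proof is correct and follows the paper's proof of Proposition~\ref{prop:composed}. Your analysis of the products $f_ja_{jk}$ is exactly Claims~\ref{cl1:prop:composed} and~\ref{cl2:prop:composed}, followed by Riemann--Hurwitz for $\psi$. The converse builds $\psi$ first and then $\phi$ totally ramified over the ramification points of $\psi$. Your appeals to Riemann's existence theorem play the role of Proposition~\ref{prop:dimhur}, which rests on the same realizability results. So the $b=0$ realizability you call the main obstacle is exactly what that proposition supplies. Also, Riemann--Hurwitz for $\phi$ combined with the formula for $g(C')$ shows that the number of residual simple branch points of $\phi$ is exactly $r(g,b,d;\e)\geq 0$, so no ``genus adjustments'' are needed.

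There is one slip in the necessity part. For $b=0$, Riemann--Hurwitz for $\psi$ reads $2g(C')-2=-2\frac{d}{\delta}+\sum\left(\frac{e_i}{\delta}-1\right)$. So $g(C')\geq 0$ is \emph{equivalent} to $\sum\left(\frac{e_i}{\delta}-1\right)\geq 2\left(\frac{d}{\delta}-1\right)$, not to a bound weaker by $2$; this is how the paper gets it, from $g'\geq 0$. The monodromy detour is therefore unnecessary, and its stated justification does not work. Transitivity gives only $\sum\geq \frac{d}{\delta}-1$, and parity merely rounds this up to the next even integer. The bound $2\left(\frac{d}{\delta}-1\right)$ for a transitive product-one tuple is Ree's theorem, i.e.\ $g(C')\geq 0$ again. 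Your fallback sentence ``By Riemann--Hurwitz\dots'' is the correct argument, so the conclusion stands.

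Where you genuinely differ is the final step. The paper shows the composed locus fills a component by counting parameters: $3(b-1)+m+\epsilon$ for $\psi$ and $(n-m)+r-\epsilon$ for $\phi$. Here $\epsilon$ corrects for automorphisms of $C'$ when $(g',b,m)\in\{(0,0,2),(1,1,0)\}$, and the total is compared with the dimension formula and equidimensionality of Proposition~\ref{prop:dimhur}. You assert this count but do not carry it out. Your monodromy alternative is valid: on the locus with distinct branch points, covers deform topologically trivially, so the monodromy group and its block system of size $\delta$ are constant along the component. This avoids both the case distinction and the dimension formula. It also shows that every member of that component with distinct branch points is composed with a degree-$\delta$ map. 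What it needs instead is the correspondence between factorizations of $\pi$ and block systems of its monodromy group, and that the constructed $\pi$ (with general branch points for $\phi$) lies in that smooth locus.
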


This is an immediate consequence of Proposition \ref{prop:composed}  below,  which is a more precise version.

The natural question arises whether there exist  components of  {\it simple} type of  $\H_{g \to b,d;\e}$ in the numerical cases where there exist  components of  non--simple type. 

We give a positive answer in all cases when $b>1$:

\begin{thm} \label{thm:simple2}
For any  $(g,b,d;\e)$ satisfying the Riemann-Hurwitz condition \eqref{eq:RH} and $b \geq 2$, there always exists a component of $\H_{g  \to b,d;\e}$  of  simple    type.
\end{thm}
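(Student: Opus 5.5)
The plan is to work with monodromy via Riemann's Existence Theorem, using the standard dictionary between factorizations and block systems. Fix $n+r$ distinct points $y_1,\dots,y_{n+r}\in B$ and a base point $y_0$. The fundamental group $\pi_1(B\setminus\{y_1,\dots,y_{n+r}\},y_0)$ has the standard presentation with generators $\alpha_1,\beta_1,\dots,\alpha_b,\beta_b,\gamma_1,\dots,\gamma_{n+r}$ and the single relation $\prod_{k}[\alpha_k,\beta_k]\prod_j\gamma_j=1$. A member $[\pi:C\to B]$ of $\H_{g\to b,d;\e}$ with these branch points is then the same as a tuple of permutations in $S_d$, up to simultaneous conjugation, satisfying:
\begin{itemize}
\item the relation above holds;
\item the tuple generates a transitive subgroup $G$;
\item the image of $\gamma_i$ is an $e_i$-cycle for $i\le n$, and the image of $\gamma_{n+j}$ is a transposition for $1\le j\le r$.
\end{itemize}
Moreover, $\pi$ factors as in \eqref{eq:compos} with $\deg\phi=\delta$ exactly when $G$ preserves a block system with blocks of size $\delta$. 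So $\pi$ is simple if and only if $G$ is primitive. Finally, the monodromy group is locally constant in families: components of $\H_{g\to b,d;\e}$ are unions of braid/mapping-class-group orbits of such tuples, and the conjugacy class of $G$ is an invariant of the orbit. It therefore suffices to exhibit one tuple with $G=S_d$. The whole component containing the corresponding cover then has general member with monodromy $S_d$, hence is of simple type.

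The construction goes as follows. Choose any $e_i$-cycles $\sigma_1,\dots,\sigma_n$ and any transpositions $\tau_1,\dots,\tau_r$, and set $P=\sigma_1\cdots\sigma_n\tau_1\cdots\tau_r$. Its sign is $(-1)^{\sum(e_i-1)+r}$, and by Riemann-Hurwitz $\sum(e_i-1)+r=2(g-1-d(b-1))$ is even, so $P\in A_d$. Since $b\ge 2$, we have at least two handle pairs available. Take $a_2=(1\,2\,\cdots\,d)$ and $b_2=(1\,2)$; their commutator is a product of two transpositions, hence even. Put $a_k=b_k=\mathrm{id}$ for $k\ge3$. Set $Q=\bigl([a_2,b_2]\,P\bigr)^{-1}\in A_d$.

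The key input is the classical fact (Ore; elementary for $S_d$) that every even permutation is a commutator in $S_d$. Using it, pick $a_1,b_1\in S_d$ with $[a_1,b_1]=Q$. Then the relation $\prod_k[a_k,b_k]\prod\sigma_i\prod\tau_j=1$ holds. The group $G$ contains the $d$-cycle $(1\,2\,\cdots\,d)$ and the adjacent transposition $(1\,2)$, so $G=S_d$; in particular $G$ is transitive and primitive. The Riemann-Hurwitz computation together with Riemann's Existence Theorem gives a connected cover $C\to B$ of genus $g$ with the prescribed ramification: cycles of length $e_i$ at the marked points, simple ramification elsewhere. This cover is a point of $\H_{g\to b,d;\e}$ lying on some component, and that component is of simple type by the invariance of $G$ noted above.

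The main obstacle is making the commutator step airtight for all $d\ge2$, including small $d$ where $A_d$ is not perfect. I would cite the statement in the form ``every element of $A_d$ is a commutator of two elements of $S_d$''. This holds for all $d$: any even permutation is conjugate in $S_d$ to its inverse, so it can be written as $x\,y x^{-1}y^{-1}$ with $x$ suitably chosen. Short direct checks for $d\le 4$ would back this up if needed. A secondary point to state carefully is that monodromy, and hence primitivity, is constant on connected components of the Hurwitz space. This is exactly why $b\ge2$ suffices: the extra handle provides room for the generators $a_2,b_2$ forcing $S_d$, regardless of the local data $\e$.
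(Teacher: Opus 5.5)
Your argument is correct, but it takes a genuinely different route from the paper.

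\textbf{The paper's route.} The paper does not use monodromy. It writes $b=b_1+b_2$ with $b_1,b_2>0$ and chooses $\alpha\in\{d-1,d-2\}$ odd. It glues a degree-$\alpha$ cover $f_1\colon C_1\to B_1$, totally ramified at one point, to a cover $f_2\colon C_2\to B_2$ in $\H_{g_2\to b_2,d;\alpha,e_1,\ldots,e_n}$, and attaches $d-\alpha$ copies of $B_1$. The result is an admissible cover $f\colon X\to B_1\cup B_2$ in $\overline{\H}_{g\to b,d;\e}$. If $f$ were a limit of non-simple covers of order $\delta$, then $f|_{C_2}$ would factor with $\deg\phi=\delta$. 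Corollary~\ref{cor:claimC}, which comes from the structure result Proposition~\ref{prop:composed}, rules this out. Ramification of order $d-1$ is impossible, and order $d-2$ forces $\delta=2$, hence $d$ even, contradicting the parity choice of $\alpha$.

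\textbf{Your route and the comparison.} You apply Riemann's Existence Theorem directly. One handle pair is spent on $(1\,2\,\cdots\,d)$ and $(1\,2)$ to force $G=S_d$, and the other absorbs the product of the local monodromies via the commutator lemma.
\begin{itemize}
\item Your argument is shorter. It needs neither Proposition~\ref{prop:composed} nor the admissible-cover compactification. It also avoids the claim that a limit of non-simple covers factors through rational maps of the prescribed degrees on each component.
\item It gives a stronger conclusion: the component has full monodromy $S_d$.
\item It makes transparent why $b\ge 2$ is the easy case.
\item The paper's degeneration, by contrast, is the same gluing template it uses for $b\in\{0,1\}$ in Theorem~\ref{thm:simple01}. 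There no spare handle exists, so your trick is unavailable.
\end{itemize}

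\textbf{One slip to fix.} Your parenthetical justification of the commutator lemma does not work. Every permutation, odd or even, is conjugate in $S_d$ to its inverse, and odd permutations are never commutators, so that property cannot be the reason. Concretely, if $yQy^{-1}=Q^{-1}$ you only get $[Q,y]=Q\cdot yQ^{-1}y^{-1}=Q^2$, i.e.\ squares are commutators. For example, $(1\,2\,3\,4)(5\,6)$ is even but is not a square, since even-length cycles of a square come in equal-length pairs.

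The lemma itself is true for every $d$ (Miller, Ore), so your construction stands. Either cite it, or argue from $[x,y]=x\cdot(yx^{-1}y^{-1})$. This shows $Q$ is a commutator in $S_d$ if and only if $Q$ is a product of two permutations of the same cycle type, which you then verify for even permutations.
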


In the cases $b \leq 1$ we give a positive answer in  several  cases. For any
$(g,b,d;\e)$ satisfying the Riemann-Hurwitz condition and having an admissible factor, let $\Delta(g,b,d;\e)$ be the largest such. Consider the following conditions $(*)_b$, for $b=0,1$:

\[
   (*)_0:  b=0, \;\; n+r \geq 4, \;\; 
\#\{e_i: e_i \neq d\}+r \geq 2 \;\; \mbox{and} \;\; \displaystyle\sum_{e_i \neq d}(e_i-1)+r \geq \Delta(g,0,d;\e)
\]
 and
\[
 (*)_1:  b=1, \;\; n+r \geq 2, \;\;  \mbox{and} \;\;
2g-2 \geq \Delta(g,1,d;\e).
\]

 Then our result says:

 \begin{thm} \label{thm:simple01}
   Let $b \in \{0,1\}$ and $(g,b,d;\e)$  satisfy  the Riemann-Hurwitz condition \eqref{eq:RH} and  admit   an admissible factor. Assume that  $(*)_b$  holds.

   Then $\H_{g  \to b,d;\e}$ has a component of   simple    type.
\end{thm}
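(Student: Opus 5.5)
We would prove Theorem \ref{thm:simple01} through Riemann's Existence Theorem, which turns components of $\H_{g\to b,d;\e}$ into orbits of monodromy data. For $b=0$, a component corresponds to a braid-orbit of tuples $(\sigma_1,\ldots,\sigma_{n+r})$ in $S_d$. Their product is $1$, they generate a transitive subgroup, $\sigma_i$ is an $e_i$-cycle (with fixed points) for $i\le n$, and the last $r$ entries are transpositions. For $b=1$ we prepend a pair $(\alpha,\beta)$ and require $[\alpha,\beta]\prod\sigma_i=1$. A member is non-simple exactly when the monodromy group $G$ is imprimitive. Indeed, a factorization through $C'$ of degree $\delta$ corresponds to a $G$-invariant block system with blocks of size $\delta$. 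So the aim is to exhibit one admissible tuple with \emph{primitive} monodromy. Primitivity is constant along a braid orbit, because the generated group does not change. Hence the component containing that tuple is of simple type.

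The plan is to build a tuple whose group contains a $p$-cycle for a prime $p>d/2$, or a transposition, and is $2$-transitive. Then $G$ is primitive, and a transposition together with primitivity forces $G=S_d$ by Jordan. In fact we will aim for $G=S_d$ directly: it suffices that $G$ is transitive and contains a transposition, provided we also ensure primitivity. We get primitivity as follows. It suffices to check that no block system of size $\delta$ is preserved for each proper divisor $\delta$ of $d$. Moreover, a tuple can be imprimitive of block size $\delta$ only if the local conditions in Definition \ref{def:adm} hold for that $\delta$. So we only need to kill block systems for admissible $\delta\le\Delta(g,b,d;\e)$. The numerical hypotheses in $(*)_b$ are exactly what we need to break these.

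\emph{Case $b=0$.} Order the tuple so that the $e_i=d$ cycles (full $d$-cycles) come first. Use the remaining cycles, those with $e_i\ne d$ plus the $r$ transpositions, to assemble a product that moves a point across every potential block. The total "length" $\sum_{e_i\ne d}(e_i-1)+r\ge\Delta$ lets us choose these cycles so that their product composes to a cycle involving at least $\Delta+1$ points in a chain. The chain is arranged so that one of the cycles (or a transposition) joins a point of a block to a point outside it while fixing the rest of that block. The condition $\#\{e_i\ne d\}+r\ge2$ provides at least two non-full cycles to play this role. The condition $n+r\ge4$ avoids the rigid three-point case, where the component count is forced. Concretely, we start from a simple-type tuple for smaller data, e.g. $d$ transpositions chained $(12),(23),\ldots$, and merge adjacent transpositions into longer cycles. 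Such mergings realize degenerations and smoothings in $\H$ and keep the group $S_d$. The parity and Riemann--Hurwitz constraints guarantee the product can be made trivial.

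\emph{Case $b=1$.} Take $\alpha$ a $d$-cycle and $\beta$ chosen so that $[\alpha,\beta]$ equals $(\prod\sigma_i)^{-1}$. Then $2g-2\ge\Delta$, which here is the total ramification $\sum(e_i-1)+r$, plays the same role as in the rational case. We choose $\sigma_i$ inside the support of a suitable transitive configuration so that $\langle\alpha,\sigma_1\rangle$ is already primitive. The condition $n+r\ge2$ provides room to adjust the commutator.

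The main obstacle is the explicit combinatorics: realizing prescribed cycle types with trivial product (or prescribed commutator) while simultaneously breaking every admissible block system. For this I would first try an induction on $n+r$. The base is a small explicit primitive tuple. The inductive step coalesces two consecutive entries (a degeneration in $\H$) or splits one, and checks that a primitive, indeed $S_d$-generating, tuple stays so. Throughout we use that a subgroup generated by a $d$-cycle and a cycle of length $\ell$ supported on consecutive points is primitive, and hence $S_d$ when a transposition is present.
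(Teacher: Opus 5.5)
\textbf{There is a genuine gap: the proposal reduces the theorem correctly to a monodromy statement, but it never proves that statement.}

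The reduction itself is sound. By Riemann's Existence Theorem, a component is of simple type exactly when the monodromy group $G$ of its general member is primitive. Primitivity is invariant along braid (or mapping class group) orbits. A single $e$-cycle can preserve a block system with blocks of size $\delta$ only if $\delta\mid e$ or $e\le\delta$, so only admissible $\delta$ need to be excluded. But this is the easy half.

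The whole content of the theorem is the existence of \emph{one} tuple that has the prescribed cycle types, satisfies the product (resp.\ commutator) relation, and generates a primitive group. You explicitly leave this as ``the main obstacle'' and do not construct such a tuple. The quantitative hypotheses $\sum_{e_i\neq d}(e_i-1)+r\ge\Delta$ and $2g-2\ge\Delta$ are never used in any checkable way.

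The mechanisms you sketch for this step also fail as stated:
\begin{itemize}
\item Aiming at $G=S_d$, or at a transposition in $G$, is impossible in general. Take $b=0$, $d=9$, $\e=(3,3,9,9)$, $g=2$. Then $r=0$, $\Delta=3$, and $(*)_0$ holds, yet every $\sigma_i$ is a cycle of odd length, so $G\le A_9$.
\item Coalescing consecutive entries replaces the generated group by a subgroup of it, and that subgroup may well be imprimitive. So ``mergings keep the group $S_d$'' is false, and primitivity would have to be re-proved for the final tuple, which is exactly the missing step.
\item For $b=1$, fixing $\alpha$ to be a $d$-cycle and solving $[\alpha,\beta]=(\prod\sigma_i)^{-1}$ for $\beta$ requires $\alpha^{-1}(\prod\sigma_i)^{-1}$ to be conjugate to $\alpha^{-1}$, i.e.\ to be a $d$-cycle. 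You never arrange this constraint together with primitivity of $\langle\alpha,\sigma_1\rangle$.
\end{itemize}

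\textbf{How the paper does it, and how to repair your route.} The paper avoids permutation combinatorics by degeneration. It builds an admissible cover $f\colon X\to B_1\cup B_2$ in $\overline{\H}_{g\to b,d;\e}$. Here $f_1\colon C_1\to B_1$ has degree $\alpha$ and carries $e_1,\dots,e_s$ plus a point of order $\alpha$, while $f_2\colon C_2\to B_2$ carries $\alpha,e_{s+1},\dots,e_n$, with $\alpha$ as in \eqref{eq:defalfa}. Hypothesis $(*)_b$ is used precisely to obtain $\alpha>\Delta$, or else $\alpha\in\{d-1,d-2\}$ with the right parity. Corollaries \ref{cor:claimC} and \ref{cor:claimD} then rule out that $f$ is a limit of covers factoring through degree $\delta$.

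If you want to stay with monodromy, this tells you which tuple to write down:
\begin{itemize}
\item Take a transitive tuple $(\sigma_1,\dots,\sigma_s,\gamma^{-1})$ on $\alpha$ letters, with $\gamma$ an $\alpha$-cycle.
\item Take a transitive degree-$d$ tuple $(\gamma,\sigma_{s+1},\dots,\sigma_n)$, preceded by a pair when $b=1$. Both exist because $g_1,g_2\ge 0$.
\item Concatenate the two along $\gamma$.
\end{itemize}
Any $G$-invariant block system with blocks of size $\delta$ is then preserved by $\gamma$. The cases $\alpha\in\{d-1,d-2\}$ die as in Corollary \ref{cor:claimC}. If instead $\alpha=\sum_{i\le s}(e_i-1)+1>\delta$, then $\delta\mid\alpha$ and $\operatorname{supp}\gamma$ is a union of blocks. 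The induced cover of degree $\alpha/\delta$ then forces $\sum_{i\le s}(e_i-1)\ge\alpha$, as in Corollary \ref{cor:claimD}, which is a contradiction.
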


 \begin{remark} \label{rem:cond*}
   Note that condition $(*)_0$ is automatically satisfied if $n+r \geq 4$ and
    \linebreak $\#\{e_i: e_i = d\} \leq 1$ (see Lemma \ref{lemma:cond*}).
 \end{remark}

Theorems \ref{thm:simple2}--\ref{thm:simple01} will be proved in Section \ref{sec:scomp}. In Section \ref{sec:ex} we will consider some examples   regarding  admissible factors, condition $(*)_b$ and the existence of  components of simple and non-simple type. These examples show in particular that we cannot avoid the conditions $(*)_b$ completely. 

 We also remark that in the special case $b=0$, $n=2$, $\e=(d,d)$, that is, of covers of $\PP^1$ totally ramified at two points, a complete answer concerning the components of the Hurwitz space has recently been given in \cite[App.~A]{Agg}: in our language, $\H_{g  \to 0,d;(d,d)}$ has a unique irreducible  component of simple type and a unique irreducible component $\H^{\delta}_{g \to 0,d;(d,d)}$ for every proper nontrivial divisor $\delta$ of $d$, when $g \geq 1$ (see Example \ref{ex:d=2tot}).

\vspace{0.2cm} {\it Convention.} We work over the field of complex numbers.

\vspace{0.2cm} {\it Acknowledgments.} A.L.K. was partially supported 
	by the Trond Mohn Foundation's project ``Pure Mathematics in
	Norway''. C.C. and S.T. are members of GNSAGA of the Istituto Nazionale di Alta Matematica ``F. Severi''.

\section{Nonemptiness and dimension of $\H_{g \to b,d;\e}$} \label{sec:dimhur}

For the sake of the reader and later reference, we prove the following result in this section:

\begin{prop} \label{prop:dimhur}
  The Hurwitz moduli space $\H_{g \to b,d;\e}$ is
nonempty if and only if  the Riemann-Hurwitz condition
 \eqref{eq:RH} holds, in which case it is 
equidimensional of dimension 
  \begin{eqnarray} \label{eq:dimhur}
    \dim \H_{g \to b,d;\e}  & = & 2(g-1)-(2d-3)(b-1)+n-\displaystyle\sum_{i=1}^n(e_i-1)  \\
 \nonumber                           & = & 3(b-1)+n+r(g,b,d;\e)=3(g-1)+n+\sigma(g,b,d;\e),
\end{eqnarray}
except for the following cases:
  \begin{itemize}
    \item[(i)] $n=0$, $g=b=1$,
\item[(ii)]  $n=0$, $g=b=0$, $d=2$,
  \item[(iii)] $n=1$, $g=b=0$, $e_1=d=2$,
  \item[(iv)] $n=2$, $g=b=0$, $e_1=e_2=d$.
  \end{itemize}
  where $\dim \H_{g \to b,d;\e}=g$. Moreover, whenever $\H_{g \to b,d;\e}\neq \emptyset$, the forgetful map
  
  \begin{eqnarray} \label{eq:forgetful1}
  \H_{g \to b,d;\e} & \dashrightarrow & \M_{b,n} \\
 \nonumber  [\pi:(C,x_1,\ldots,x_n) \to B] & \mapsto & (B,\pi(x_1),\ldots,\pi(x_n))
  \end{eqnarray}
   is dominant.
\end{prop}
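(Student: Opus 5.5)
The plan is to prove Proposition \ref{prop:dimhur} with Riemann's Existence Theorem (cf.~\cite[III, Cor.~4.9-10]{Mi}). It turns the construction of covers into a combinatorial problem about permutations. The Hurwitz space then becomes a finite étale cover of the configuration space of branch data over $\M_{b,n+r}$, modulo the symmetries permuting the $r$ unmarked simple branch points.

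\emph{Necessity and nonemptiness.} Necessity of \eqref{eq:RH} is immediate from the Riemann--Hurwitz formula, since $r\geq 0$. For sufficiency, fix $B$ of genus $b$ and $n+r$ distinct points $p_1,\dots,p_{n+r}\in B$. We must produce $a_1,b_1,\dots,a_b,b_b,\sigma_1,\dots,\sigma_{n+r}\in S_d$ satisfying three conditions:
\begin{itemize}
\item[(a)] $\prod[a_j,b_j]\prod\sigma_k=1$;
\item[(b)] $\sigma_i$ is an $e_i$-cycle for $i\leq n$ and a transposition for $i>n$;
\item[(c)] the generated subgroup is transitive.
\end{itemize}
\begin{itemize}
\item \emph{Case $b\geq 1$.} Every even permutation in $S_d$ is a commutator, and every element of $A_d$ is a product of the commutator $[a_1,b_1]$ with a suitable choice. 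The parity constraint is exactly that $\sum(e_i-1)+r$ is even, which \eqref{eq:RH} guarantees. So we can pick the $\sigma_k$ freely and solve for $a_1,b_1$. Transitivity is arranged by taking $a_2$ (or $a_1$) to be a $d$-cycle: choose $a_1$ a $d$-cycle and $b_1$ with $[a_1,b_1]=(\prod\sigma_k)^{-1}$. Such a $b_1$ exists because conjugacy classes in $A_d$ meet $a_1^{-1}\cdot(\text{class of }a_1)$ appropriately. This is the step most in need of care; alternatively I would absorb transitivity by choosing the transpositions to connect everything.
\item \emph{Case $b=0$.} We need $\sigma_1\cdots\sigma_{n+r}=1$ with transitivity. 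Here the bound $\sum(e_i-1)+r=2g-2+2d\geq 2d-2$ is exactly what is needed. I would build the tuple inductively: start with $e_i$-cycles chained so that they overlap in one element each, yielding a transitive product. Then use the remaining transpositions in pairs $\tau,\tau$ to cancel, or to adjust the product to the identity. Concretely, reduce to the standard fact that a product of cycles of total ``length minus one'' at least $2(d-1)$ with even excess can be realized (the Hurwitz problem over $\PP^1$ when all but possibly the unmarked points are prescribed; having $r$ free transpositions makes it always solvable). I expect this combinatorial construction to be the main obstacle, and I would handle it by induction on $r$ and on $d$.
\end{itemize}

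\emph{Dimension and dominance.} Riemann's Existence Theorem says the map $\H_{g\to b,d;\e}\to\{(B,p_1,\dots,p_{n+r})\}/S_r$ is quasi-finite and étale onto an open set. Moreover, the construction above works for \emph{every} configuration of distinct points, so the map is surjective on each component. The target has dimension $\dim\M_{b,n+r}=3b-3+n+r$ whenever this moduli space is stable, that is, $2b-2+n+r>0$. This gives \eqref{eq:dimhur}; the other two expressions follow by substituting $r$ and $\sigma$. Each component is also dominant onto $\M_{b,n+r}$, hence onto $\M_{b,n}$ by forgetting points, which proves dominance of \eqref{eq:forgetful1}.

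\emph{Exceptional cases.} When $2b-2+n+r\leq 0$ the automorphism group of the marked base is positive-dimensional, so the dimension count must be corrected. These are $b=1$, $n+r=0$ (étale covers of elliptic curves, so $g=1$), and $b=0$, $n+r\leq 2$. In the latter case Riemann--Hurwitz forces $g=0$, and we get exactly (ii)--(iv): the cyclic covers $z\mapsto z^d$, where $n+r=2$, which gives (ii) with $d=2$, (iii), and (iv). In each of these cases the space is a point or is $\M_{1,1}$-like, of dimension $g$; I would verify this directly.
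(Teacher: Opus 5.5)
Your dimension count, your treatment of the exceptional pairs $(b,n+r)$ and your dominance argument follow the paper's route: Riemann's Existence Theorem makes the forgetful map to $\M_{b,n+r}/\Sym^r$ finite. The problem is nonemptiness. The paper does not prove it but quotes realizability results (\cite{EKS} for $b>0$, \cite{CFG} for $b=0$), whereas you try to construct the monodromy by hand, and for $b=0$ this leaves a genuine gap.

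The ``standard fact'' you reduce to is that $e_i$-cycles and $r$ transpositions with $\sum(e_i-1)+r=2d-2+2g$ always admit a transitive tuple with product $1$. That is not a reduction; it is the whole content of the claim. It is also not formal: the Hurwitz existence problem over $\PP^1$ has non-realizable data satisfying Riemann--Hurwitz (degree $4$, partitions $[2,2],[2,2],[3,1]$). So a proof must genuinely use that each local monodromy is a single cycle, and your sketch does not.

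Free transpositions cannot be relied on either, since $r$ may be $0$. Take $d=4$, $\e=(3,3,3)$, $g=0$, so $r=0$. Three $3$-cycles cannot be chained in $S_4$ so that each overlaps the others in one letter (that needs $7$ letters), and there is nothing to cancel. Yet the datum is realizable, e.g.\ by $(123)(142)(134)=1$, with heavily overlapping cycles. An unspecified ``induction on $r$ and $d$'' does not close this. You need either to cite a realizability theorem for single-cycle branch data over $\PP^1$, as the paper does, or to give an actual construction.

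For $b\geq 1$ the monodromy approach works, but transitivity must rest on precise group-theoretic facts rather than ``conjugacy classes meet appropriately.''
\begin{itemize}
\item For $b\geq 2$, take $a_2$ a $d$-cycle and $b_2=1$, and use that every even permutation is a commutator in $S_d$.
\item For $b=1$, $r$ can be $0$, so transpositions cannot supply transitivity. Note that $[a_1,b_1]=a_1\cdot b_1a_1^{-1}b_1^{-1}$ runs over $a_1$ times all $d$-cycles. You therefore need that every even permutation is a product of two $d$-cycles, which is a theorem of Bertram.
\end{itemize}

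Finally, ``the construction works for every configuration, so the map is surjective on each component'' is a non sequitur: surjectivity of the whole map says nothing about individual components. What gives equidimensionality and dominance of every component is two facts together:
\begin{itemize}
\item the forgetful map is finite \'etale over the locus of distinct branch points, hence open onto an irreducible target;
\item covers with colliding branch points deform to covers with distinct ones, so no component lies over the collision locus.
\end{itemize}
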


\begin{proof}   
  Fix  any $(B,y_1,\ldots,y_n) \in \M_{b,n}$ and consider the scheme $\H_{g \to b,d;\e}(B,y_1,\ldots,y_n)$
   of all  degree--$d$  covers  $\pi: C\to B$, with $C$ a smooth irreducible curve of genus $g$, and $\pi$ ramified at points $x_1,\ldots, x_n$ such that $\pi(x_i)=y_i$ for $1\leq i\leq n$, of orders $e_1,\ldots , e_n$. This is non--empty   if and only if the Riemann-Hurwitz condition
   \eqref{eq:RH} holds;   indeed, for $b>0$, this follows from \cite[Prop. 3.4]{EKS} (see also \cite[Thm.1.11]{Pe}), whereas for $b=0$, it follows from
 \cite[Cor. F]{CFG}. The dominance of \eqref{eq:forgetful1} follows.

 Let $r=r(g,b,d;\e):=2(g-1-d(b-1))-\displaystyle\sum_{i=1}^n(e_i-1)$  as defined in the introduction.  We have a forgetful rational map 
\[
  \xymatrix{
  \H_{g \to b,d;\e} \ar@{-->}[r] & \M_{b,n+r}/\Sym^r},
\]
where the symmetric group acts on the $r$ unmarked branch points of the base curve. By Riemann's Existence Theorem (cf., e.g., \cite[III, Cor. 4.9-10]{Mi}, this map is finite, whence
\begin{equation} \label{eq:dimhur_pf}
  \dim \H_{g \to b,d;\e} =\dim \M_{b,n+r}/\Sym^r = \dim \M_{b,n+r}.
  \end{equation}
Except for the special cases
\begin{equation}
  \label{eq:exceptions}
  (b,n+r) \in \{ (0,0),(0,1),(0,2),(1,0)\},
\end{equation}
we have
\begin{eqnarray*}
  \dim \M_{b,n+r} & = & 3b-3+n+r=3b-3+n+2(g-1-d(b-1))-\displaystyle\sum_{i=1}^n(e_i-1)  \\
  & = & 2(g-1)-(2d-3)(b-1)+n-\displaystyle\sum_{i=1}^n(e_i-1) ,
  \end{eqnarray*}
proving \eqref{eq:dimhur}. (The  last  equality in \eqref{eq:dimhur} is easily verified.)

We now treat the special cases in \eqref{eq:exceptions}.\medskip

{\bf Case $(b,n+r)=(0,0)$:} Then $r=2(g-1+d)=0$, whence $g=1-d<0$, a contradiction.\medskip

{\bf Case $(b,n+r)=(0,1)$:} Inserting $n=b=0$ in the expression for $r$ yields
$r=2(g-1+d) \neq 1$, a contradiction. Inserting $(b,n)=(0,1)$ in the expression for $r$ yields
$r=2(g-1+d)+1-e_1=0$, whence $e_1=2(g-1+d)+1 \geq 2(-1+d)+1=2d-1$, contradicting the fact that $e_1 \leq d$.\medskip

{\bf Case $(b,n+r)=(0,2)$:} 
We treat the cases $n \in\{0,1,2\}$ separately.

If $n=0$, then inserting $n=b=0$ in the expression for $r$ yields
$r=2(g-1+d)=2$, whence $g+d=2$, yielding the only possibility $(g,d)=(0,2)$. This is case (ii) in the statement.

If $n=1$, then inserting $(b,n)=(0,1)$ in the expression for $r$ yields
$r=2(g-1+d)+1-e_1=1$, whence $e_1= 2(g-1+d)$. Since $e_1 \leq d$, this yields the only possibility $g=0$ and $e_1=d=2$. This is case (iii). 

If $n=2$, then inserting $(b,n)=(0,2)$ in the expression for $r$ yields
$r=2(g-1+d)+2-e_1-e_2=0$, whence $e_1+e_2= 2(g+d)$. Since $e_i \leq d$,
the only possibility is $g=0$ and $e_1=e_2=d$, which is case (iv). 

In all these cases we compute $\dim \H_{0 \to 0,d;\e}=\dim \M_{0,2}=0$, as claimed.\medskip

{\bf Case $(b,n+r)=(1,0)$:} Inserting $n= r=0$  and $b=1$  in the expression for $r$ yields
$0=r=2(g-1)$, whence $g=1$, yielding case (i). We compute
$\dim \H_{1 \to 1,d;\mathbf{0}}=\dim \M_{1,0}=1$, as claimed.
\end{proof}

 \section{Components of the Hurwitz scheme of non--simple   type} \label{sec:nscomp}

 In this section we prove the following result, which is a more precise version of Theorem \ref{thm:composed}:

\begin{proposition} \label{prop:composed}
  There exists a component of $\H_{g  \to b,d;\e}$  of  non--simple    type  if and only if  $(g,b,d;\e)$ has an admissible factor $\delta$ (cf. Definition \ref{def:adm}). 

  For each  such $\delta$  there is at least one component of  $\H_{g  \to b,d;\e}$  whose general member  admits a factorization of the form \eqref{eq:compos}.  Moreover, denoting by $x_1,\ldots,x_n$  the ramification points of $\pi$, with $e_i$ being the order of $x_i$, and reordering indices such that $e_1,\ldots,e_m$ are precisely the $e_i$ with $\delta|e_i$ and $\delta<e_i$, the following conditions are satisfied: 
  
  \begin{itemize}
  \item[(i)] $C'$ has genus $g' :=1+\frac{d}{\delta}(b-1) +\frac{1}{2} \displaystyle\sum_{i=1}^m \left(\frac{e_i}{\delta}-1\right)$,
  \item[(ii)] 
    $\psi$  has degree  $\frac{d}{\delta}$  and  is ramified precisely  at  $\phi(x_1),\ldots,\phi(x_m)$, with  order  $\frac{e_i}{\delta}$  at $\phi(x_i)$,
  \item[(iii)] $\phi$  has degree  $\delta$  and  is totally ramified at $x_1,\ldots,x_m$, ramified at $x_j$ with order $e_j$  for each $j \in \{m+1,\ldots,n\}$, and otherwise simply ramified  at $r=r(g,b,d;\e)$ distinct points. 
  \end{itemize}
\end{proposition}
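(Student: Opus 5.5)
We prove the two implications separately: necessity comes from a local analysis of ramification in a factorization \eqref{eq:compos}, and sufficiency comes from building $\psi$ and $\phi$ separately with Proposition \ref{prop:dimhur} and then comparing dimensions.

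\emph{Necessity.} Let $\pi=\psi\circ\phi$ be a general member of a non--simple component, with $\deg\phi=\delta$. Then $\delta\mid d$ and $1<\delta<d$. By the remark after Proposition \ref{prop:dimhur} we may assume that the points $\pi(x_1),\dots,\pi(x_n)$ and the $r$ simple branch points are all distinct. Then each fibre of $\pi$ over a branch point contains exactly one ramification point. Fix a branch point $y$ with ramification point $x$ of order $e$, and put $z=\phi(x)$.

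If $\psi$ is unramified at $z$, then all of $\phi^{-1}(z)$ consists of points where $\pi$ has index $\le 1$ except $x$. Hence $e\le\delta$, and $e$ is the ramification order of $\phi$ at $x$.

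If $\psi$ has order $a>1$ at $z$, then every point of $\phi^{-1}(z)$ is ramified for $\pi$, with index divisible by $a$. Since $x$ is the only ramified point over $y$, the fibre $\phi^{-1}(z)$ is the single point $x$. So $\phi$ is totally ramified there and $e=a\delta$. The other points of $\psi^{-1}(y)$ must be unramified for $\psi$, and $\phi$ must be étale over them.

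This shows that $\delta\mid e_i$ or $e_i\le\delta$ for every $i$. The case $e_i=\delta$ is compatible with the definition, since then $\delta\mid e_i$. Simple ramification points, having $e=2\le\delta$, are ramification points of $\phi$. We have also obtained (ii) and (iii).

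Next apply Riemann--Hurwitz to $\psi$. Its ramification is exactly $\sum_{i\le m}(e_i/\delta-1)$, so
\[
2g'-2=\tfrac{d}{\delta}(2b-2)+\sum_{i\le m}\left(\tfrac{e_i}{\delta}-1\right).
\]
This forces the parity condition and gives (i). When $b=0$ we have $g'\ge0$, which gives $\sum(e_i/\delta-1)\ge 2(d/\delta-1)$. Thus $\delta$ is admissible.

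\emph{Sufficiency.} Let $\delta$ be admissible and define $g'$ by (i). The data $(g',b,d/\delta;(e_1/\delta,\dots,e_m/\delta))$ satisfy \eqref{eq:RH}, with $r=0$ for $\psi$. Indeed, (i) is exactly that equality, $g'$ is an integer by the parity condition, and $g'\ge0$ by the $b=0$ inequality when $b=0$ (automatically when $b\ge1$). Proposition \ref{prop:dimhur} then gives a cover $\psi:C'\to B$ branched exactly at $m$ general points $y_1,\dots,y_m$ of $B$.

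Next we need $\phi:C\to C'$ of degree $\delta$ with the following ramification:
\begin{itemize}
\item total ramification at points over $z_i=\psi^{-1}(y_i)$ for $i\le m$;
\item order $e_j$ at points over chosen unramified points of $C'$, for $j>m$;
\item $r$ further simple ramification points.
\end{itemize}
We must check the Riemann--Hurwitz condition \eqref{eq:RH} for the data $(g,g',\delta;(\delta,\dots,\delta,e_{m+1},\dots,e_n))$. A direct computation shows that
\[
2g-2-\delta(2g'-2)-(m(\delta-1)+\sum_{j>m}(e_j-1))
\]
equals $r(g,b,d;\e)\ge0$, so the condition holds. Applying Proposition \ref{prop:dimhur} over $C'$ with prescribed general branch points (its proof works for any fixed marked base, including $g'=0$) produces $\phi$.

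We then choose the branch points of $\phi$ to lie over distinct points of $B$, different from the $y_i$. The local analysis above then shows that $\pi=\psi\circ\phi$ has exactly the ramification profile $\e$ plus $r$ simple points, so $[\pi]\in\H_{g\to b,d;\e}$.

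\emph{Main obstacle.} The hard part is to show that such composed covers fill a whole component, rather than a proper closed subset of a component whose general member is simple. We handle this by counting dimensions. The $m$ branch points of $\psi$ and the $n-m+r$ branch points of $\phi$ can all be chosen freely. By Riemann's Existence Theorem the locus of composed covers maps finitely onto an open subset of $\M_{b,m}\times$ (choices of $n-m+r$ points on $C'$), which has dimension $3b-3+m+(n-m+r)=3b-3+n+r$. This equals $\dim\H_{g\to b,d;\e}$ by \eqref{eq:dimhur}. Since the space is equidimensional, the closure of this locus is a union of components, and their general members are non--simple of order $\delta$.

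Care is needed in the exceptional small cases (for instance $b=0$ with few branch points). There we would check separately that these cases cannot occur once an admissible factor exists, which already forces $d\ge4$.
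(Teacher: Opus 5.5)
Your necessity argument and your construction follow the paper's proof essentially step for step. The local analysis is its Claims~1--2, and the sufficiency part builds $\psi$ and then $\phi$ via Proposition~\ref{prop:dimhur} and counts parameters. The genuine problem is your last paragraph: the degenerate cases are \emph{not} ruled out by the existence of an admissible factor, and in them your count is not justified as written.

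The cases where the count for $\psi$ degenerates are $(g',b,m)=(0,0,2)$ and $(1,1,0)$. They occur all the time, for example:
\begin{itemize}
\item $b=0$, $\e=(4,4)$, $\delta=2$ (Example~\ref{ex:bot});
\item any $b=1$ datum with all $e_i\le\delta$.
\end{itemize}
In these cases the coarse space $\M_{b,m}$ has dimension $3b-3+m+1$, not $3b-3+m$. Also, $(C',z_1,\dots,z_m)$ carries a one-dimensional group of automorphisms compatible with $\psi$: dilations of $\PP^1$ fixing $0,\infty$, respectively translations of the elliptic curve $C'$. So the $n-m+r$ points on $C'$ are only determined modulo this group. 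Your claimed finite map to ``$\M_{b,m}\times$ (points on $C'$)'' therefore does not exist as stated. The extra modulus and the automorphisms cancel, so the total is again $3b-3+n+r$. This cancellation is exactly the correction $\epsilon$ in \eqref{eq:parpsi}--\eqref{eq:parfi}, and you need to include it rather than argue the cases away.

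Moreover, the exceptional cases (i) and (iv) of Proposition~\ref{prop:dimhur} for $\H_{g\to b,d;\e}$ itself are $\H_{1\to 1,d;\mathbf{0}}$ and $\H_{0\to 0,d;(d,d)}$. Both have admissible factors whenever $d$ is composite (Examples~\ref{ex:rat} and~\ref{ex:ell}), so ``$d\ge 4$'' does not exclude them. There \eqref{eq:dimhur} fails, and your equality $3b-3+n+r=\dim\H_{g\to b,d;\e}$ is false. These cases are harmless, but they must be argued directly:
\begin{itemize}
\item in case (iv), $\H_{0\to 0,d;(d,d)}$ is zero-dimensional, so the nonempty composed locus is a component;
\item in case (i), $\dim\H_{1\to1,d;\mathbf{0}}=1$, and the composed locus dominates $\M_1$ because \'etale $\psi$ and $\phi$ exist over every $B$; hence it again fills a component.
\end{itemize}
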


\begin{proof}
  Assume that a general member of a component of $\H_{g  \to b,d;\e}$ admits a composition as in \eqref{eq:compos}. Let $x_1,\ldots,x_n$ be the ramification points of $\pi$, with $e_i$ being the order of $x_i$. Let $\delta:=\deg \phi$,  so that $\deg \psi=\frac{d}{\delta}$.   As we

   remarked  in the  introduction,  by generality,    the points $\pi(x_1),\ldots,\pi(x_n)$ are distinct. Thus, also the points $y_i:=\phi(x_i) \in C'$ are distinct, for $i \in \{1,\ldots,n\}$. By generality, the ramification of $\pi$ outside the $x_i$ is simple and lie in other, distinct fibers of $\pi$. In particular the generality assumption implies that 
  \begin{equation}
    \label{eq:fifrapi}
    \pi^*(\pi(x_i))=e_ix_i+R_i \;\; \mbox{with $R_i$ effective, reduced and supported outside} \;\; \{x_1,\ldots,x_n\}.
  \end{equation}

  Let $\delta_i$ denote the order of ramification of $\phi$ at $x_i$, that is,
  \begin{equation}
    \label{eq:ramfi}
    \phi^*y_i=\delta_ix_i+R'_i, \;\; \mbox{with $R'_i$ effective, reduced and supported outside} \;\; \{x_1,\ldots,x_n\},
  \end{equation}
  for each $i \in \{1,\ldots,n\}$.

  \begin{claim} \label{cl1:prop:composed}
    If $\psi$ is ramified at $y_i$, then $\phi^*y_i=\delta x_i$, that is, $\phi$ is totally ramified at $x_i$.
  \end{claim}

  \begin{proof}[Proof of claim]
    If $\psi$ is ramified at $y_i$, we have $\psi^*\psi(y_i)\geq s_iy_i$ for some $s_i \geq 2$. Then
    \[ \pi^*\pi(x_i)=\phi^*(\psi^*\psi(y_i)) \geq s_i \phi^*y_i=s_i(\delta_ix_i+R'_i)=s_i\delta_ix_i+s_iR'_i,\]
    contradicting \eqref{eq:fifrapi}, unless $R'_i=0$, in which case
    $\phi^*y_i=\delta x_i$ by \eqref{eq:ramfi}. 
  \end{proof}

\begin{claim} \label{cl2:prop:composed}
The map $\psi$ is unramified outside $\{y_1,\ldots,y_n\}$.
\end{claim}

  \begin{proof}[Proof of claim]
    If $\psi$ is ramified at some point $z \not \in \{y_1,\ldots,y_n\}$, we have $\psi^*\psi(z)\geq sz$ for some $s \geq 2$. Then
    \[ \pi^*\psi(z)=\phi^*(\psi^*\psi(z)) \geq s \phi^*z,\]
    contradicting the fact that the ramification of $\pi$ outside the $x_i$ is simple and lie in other, distinct fibers of $\pi$.
  \end{proof}

  By the claims we may reorder the $x_i$ so that $y_1,\ldots,y_m$ are precisely the ramification points of $\psi$, for some integer $m \leq n$, and then $\phi$ is  totally ramified (thus of order $\delta$) at $x_1,\ldots,x_m$ and consequently the ramification order of $\psi$ at each $y_i$ is $\frac{e_i}{\delta}$ for $i \in \{1,\ldots,m\}$.
  
  For $j \in \{m+1,\ldots,n\}$ the ramification order of $\phi$ at $x_j$ is $e_j$, whence $e_i<\delta$,  and the generality of $\pi$ implies that $\phi$ is otherwise simply ramified. By  the Riemann-Hurwitz  formula on $\psi$, the genus $g'$ of $C'$ satisfies
  \[ 2(g'-1)=2\frac{d}{\delta}(b-1)+\displaystyle\sum_{i=1}^m\left(\frac{e_i}{\delta}-1\right),\]
  which can be rewritten as 
  \begin{equation}
    \label{eq:g'Hur}
g':=1+\frac{d}{\delta}(b-1) +\frac{1}{2} \displaystyle\sum_{i=1}^m \left(\frac{e_i}{\delta}-1\right),
  \end{equation}
  which in particular must be a natural number.  Consequently,
$ \displaystyle\sum_{i=1}^m \left(\frac{e_i}{\delta}-1\right)$ is even, and, if $b=0$, also $\geq 2\left(\frac{d}{\delta}-1\right)$. Hence, $\delta$ is admissible factor of $(g,b,d,\e)$.  

We have thus concluded the ``only if''-part of the proposition. 

Conversely, assume the existence of the integer $\delta$  as in the hypotheses.  Then $g'$ as defined in \eqref{eq:g'Hur} is a natural number.  Set $m:=\#\{ e_i : \delta|e_i \;  \mbox{and} \; \delta<e_i \}$.  By  Proposition  \ref{prop:dimhur},
for any $m$-pointed curve $(B,z_1,\ldots,z_m) \in \M_{b,m}$,   there is a  smooth  irreducible   projective  curve $C'$ of genus $g'$ with a degree-$\frac{d}{\delta}$ cover to
 $B$  ramified precisely at $m$ points $y_1,\ldots,y_m$  over $z_1,\ldots,z_m$,  with order $\frac{e_i}{\delta}$, for $i \in \{1,\ldots,m\}$.
Then,  again  by
 Proposition  \ref{prop:dimhur},    there is a  smooth   irreducible   projective  curve $C$ with a degree-$\delta$ cover to $C'$ totally branched at $y_1,\ldots,y_m$, and branched at $n-m$ further points with order $e_{m+1},\ldots,e_n$ and finally simply branched at  $r=r(g,b,d;\e)$  points.

  Using the Riemann-Hurwitz   formula on $\phi$  and \eqref{eq:g'Hur}, one verifies that $g(C)=g$.  Thus $\pi:=\psi \circ \phi: C \to \PP^1$ is a member of
$\H_{g  \to b,d;\e}$, as desired.

We have to prove that this construction yields entire  components  of
$\H_{g  \to b,d;\e}$. To do so, we compute the number of parameters  of the construction. 

The construction of the map $\psi:C' \to B$ depends on $ \dim \H_{g'  \to b,\frac{d}{\delta};\e'}$ parameters, where $\e':=\left(\frac{e_1}{\delta},\ldots, \frac{e_m}{\delta}\right)$.  We can compute this number from  Proposition \ref{prop:dimhur}. If we are in one of the exceptional cases therein, then two cases may occur:
\begin{itemize}
\item $g'=b=0$ and $m \in \{0,1,2\}$. From \eqref{eq:g'Hur} we must have $m=2$, whence $\frac{e_1}{\delta}=\frac{e_2}{\delta}=\frac{d}{\delta}$ by Proposition \ref{prop:dimhur}, and $\dim \H_{ 0 \to 0,\frac{d}{\delta};\e'}=0$. The fact that $\frac{e_1}{\delta}=\frac{e_2}{\delta}=\frac{d}{\delta}$ is also a consequence of \eqref{eq:g'Hur}.
  \item $g'=b=1$, $m=0$. Then $\dim \H_{ 1 \to 1,\frac{d}{\delta};\mathbf{0}}=1$ by Proposition \ref{prop:dimhur}.
\end{itemize}
In all other cases, we have, by Proposition \ref{prop:dimhur}  and Claim \ref{cl1:prop:composed},
\[
  \dim  \H_{g' \to b,\frac{d}{\delta};\e'}  =  3(b-1)+m.
\]
To summarize, the construction of $\psi$ depends on
the following number of parameters:
\begin{equation}
  \label{eq:parpsi}
  3(b-1)+m+\epsilon, \;\; \mbox{with} \;\; \epsilon= \begin{cases}
    1, & \mbox{if} \;\; (g',b,m) \in \{(0,0,2),(1,1,0)\},\\
    0, & \mbox{otherwise},
  \end{cases}
\end{equation}

The map $\phi:C \to C'$ is now forced to be totally ramified at  the $m$ points on $C'$ that are ramification points of $\psi$, so the construction of $\phi$ depends on the choice of the remaining $n-m$ ramification points completing the sequence of $n$ prescribed ramification points of $\pi$, plus the  
$r=r(g,b,d;\e)$   simple ramification points, that is, on
\[ (n-m)+r=(n-m)+2(g-1-d(b-1))-\displaystyle\sum_{i=1}^n\left(e_i-1\right) =2(g-1-d(b-1)+n)-m-\displaystyle\sum_{i=1}^ne_i \]
parameters. On the other hand,
we must also subtract the  dimension of the family  of automorphisms of $C'$ fixing the $m$ ramification points of $\psi$, which can only be positive when  $g'=0$ and $m \leq 2$  or $g'=1$ and $m=0$.  In these cases, \eqref{eq:g'Hur} implies  $(b,m)=(0,2)$  or $b=1$, respectively  and the dimension of the family of automorphisms of $C'$ fixing the $m$ ramification points of $\psi$ is 1.   
Thus, the number of parameters on which $\phi$ depends is
\begin{equation}
  \label{eq:parfi}
2(g-1-d(b-1)+n)-m-\displaystyle\sum_{i=1}^ne_i -\epsilon, \;\;\mbox{with $\epsilon$ as in \eqref{eq:parpsi}}.
\end{equation}
Summing up, the construction of $\pi$ via $\psi$ and $\phi$ depends on the number of parameters given by adding \eqref{eq:parpsi} and \eqref{eq:parfi},  which is,
\begin{eqnarray*}
   & & 3(b-1)+m + 2(g-1-d(b-1)+n)-m-\displaystyle\sum_{i=1}^ne_i   \\
  & =  & 2(g-1)-(2d-3)(b-1)+n-\displaystyle\sum_{i=1}^n\left(e_i-1\right)   =    \dim \H_{g  \to b,d;\e},
\end{eqnarray*}
by  Proposition \ref{prop:dimhur}. 
\end{proof}

\begin{remark} \label{rem:composed}
  It follows from the proof that for the general member
  $[\pi: (C,x_1,\ldots,x_n) \to  B]$ in a component of $\H_{g  \to b ,d;\e}$  of  non--simple    type,  then $[\psi:(C',\phi(x_1),\ldots,\phi(x_m)) \to B]$  is a general member of $\H_{g' \to b,\frac{d}{\delta},\e'}$, with $\e':=(\frac{e_1}{\delta},\ldots,\frac{e_m}{\delta})$. Moreover, 
  \begin{eqnarray*}
    \sigma\left(g',b,\frac{d}{\delta};\e'\right) & = & m-\frac{1}{\delta}\displaystyle\sum_{i=1}^me_i-(2\frac{d}{\delta}-3)(b-1)-g'+1 \\
 & \stackrel{\eqref{eq:g'Hur}}{=} &    -2(g'-1)+3(b-1)-g'+1=-3(g'-b),
  \end{eqnarray*}
is the mimimal value. This is also a consequence of the fact that all ramification of $\psi$ is determined. 
\end{remark}

\begin{remark} \label{rem:composed2}
  Note that if $b=0$, the  last bullet point in Definition \ref{def:adm}  requires  $m:= \#\{i\;|\; \delta|e_i \; \mbox{and} \; \delta<e_i\} \geq 2$.  
\end{remark}

We state, for later use, the following corollaries in two special cases:

\begin{cor} \label{cor:claimC}
  Let  $[\pi: C\to B] \in \H_{g \to b,d;\e}^{\delta}$.    Then
  $e_i \neq d-1$ for all $i$, and if $e_i=d-2$ for some $i$, then $\delta=2$.
\end{cor}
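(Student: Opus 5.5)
The plan is to deduce the corollary from the arithmetic constraints in Definition \ref{def:adm}, which Proposition \ref{prop:composed} shows every $\delta$ with $\H^{\delta}_{g\to b,d;\e}\neq\emptyset$ must satisfy. Write $d=\delta k$. Since $\delta$ is a proper divisor of $d$, we have $2\leq \delta\leq d/2$ and $k\geq 2$. Admissibility gives, for each $i$, that either $\delta\mid e_i$ or $e_i<\delta$; we simply test the two values $e_i=d-1$ and $e_i=d-2$ against this dichotomy.

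\emph{Case $e_i=d-1$.} If $\delta\mid d-1$, then since $\delta\mid d$ it would follow that $\delta\mid 1$, contradicting $\delta>1$. Otherwise $d-1=e_i<\delta\leq d/2$, which forces $d<2$; this is impossible because $d\geq 4$ (as noted after Definition \ref{def:adm}, and in any case $d\geq 2$ suffices here). Hence $e_i\neq d-1$.

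\emph{Case $e_i=d-2$.} If $\delta\mid d-2$, then $\delta\mid 2$, so $\delta=2$. Otherwise $d-2<\delta\leq d/2$, which gives $d<4$. This contradicts $d\geq 4$, which holds because an admissible factor exists. Concretely: $\delta\geq 2$ and $k\geq 2$ give $d=\delta k\geq 4$. Hence $\delta=2$.

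There is no real obstacle here. The one point to be careful about is to invoke the ``only if'' direction of Proposition \ref{prop:composed} correctly, namely that the order $\delta$ of a non-simple component is an admissible factor, as established in the first half of that proof. Equivalently, one can argue directly from items (ii)--(iii) of that proposition: each $x_i$ is either totally ramified for $\phi$, so $\delta\mid e_i$, or has order $e_i$ under $\phi$, so $e_i\leq\delta$. The latter is in fact strictly less than $\delta$, since $e_i=\delta$ would make $x_i$ totally ramified for $\phi$ with $\psi$ unramified at $\phi(x_i)$, and then $e_i<\delta$ is excluded only in the sense that $\delta\mid e_i$ already covers it. In either formulation the divisibility argument above applies verbatim.
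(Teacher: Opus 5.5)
Your proof is correct and follows essentially the paper's proof: both take the dichotomy ``$\delta\mid e_i$ or $e_i<\delta$'' from Proposition \ref{prop:composed} and use $\delta\mid d$ to force $\delta\mid 1$ (impossible) or $\delta\mid 2$ (so $\delta=2$). Both also rule out $e_i<\delta$ via $\delta\leq d/2$, which the paper phrases as the absurdity of $\delta(\frac{d}{\delta}-1)<2$; you additionally spell out the $d-1<\delta$ case, which the paper leaves implicit. Your garbled closing remark about $e_i=\delta$ does no harm, since that case is simply absorbed into $\delta\mid e_i$.
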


\begin{proof}
  By Proposition \ref{prop:composed} each $e_i$ is either divisible by $\delta$ or $<\delta$. It follows that $e_i \neq d-1$. Moreover, if $e_i=d-2$, then  there are two possibilities:
  \begin{itemize}
  \item $\delta|(d-2)$; then,  since $\delta|d$, we must have $\delta=2$.
  \item $d-2 <\delta$; this is equivalent to 
    $\delta(\frac{d}{\delta}-1)<2$, an absurdity.
    \end{itemize}
  \end{proof}

\begin{cor} \label{cor:claimD}
  Let   $[\pi: C\to B] \in \H_{g \to 0,d;\e}^{\delta}$ be    such that
 $n \geq 3$ and $r(g,0,d;\e)=0$ (that is, all ramification points are marked and they are at least three). 
Assume that there is some point of total ramification, say $e_1=d$.

Then $\displaystyle\sum_{i=2}^{n}(e_i-1) \geq d$.
  \end{cor}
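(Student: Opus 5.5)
The plan is to extract everything from the structure that Proposition \ref{prop:composed} gives for a general member of a component of non--simple type of order $\delta$, with $b=0$. Reorder the indices as in Proposition \ref{prop:composed}, so that $e_1,\ldots,e_m$ are exactly the $e_i$ with $\delta|e_i$ and $\delta<e_i$. Since $\delta<d$ and $\delta|d$, the index with $e_1=d$ lies among these, and we keep the label $1$ for it. By Remark \ref{rem:composed2}, which applies because $b=0$, we have $m\geq 2$.

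The key inequality comes from the genus formula (i) of Proposition \ref{prop:composed} for the intermediate curve $C'$. With $b=0$ it reads
\[ g'=1-\frac{d}{\delta}+\frac12\sum_{i=1}^m\left(\frac{e_i}{\delta}-1\right)\geq 0. \]
Separating the term $i=1$, where $e_1/\delta=d/\delta$, gives $\sum_{i=2}^m\left(\frac{e_i}{\delta}-1\right)\geq \frac{d}{\delta}-1$. This is the admissibility inequality of Definition \ref{def:adm} with the total ramification point removed.

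Next, convert this into a bound on $e_i-1$. For each $i\in\{2,\ldots,m\}$ write $e_i-1=\delta\left(\frac{e_i}{\delta}-1\right)+(\delta-1)$. Summing over $i$ and using the inequality above yields
\[ \sum_{i=2}^m(e_i-1)\geq (d-\delta)+(m-1)(\delta-1)\geq d-1, \]
where the last step uses $m\geq 2$.

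The only delicate point is gaining the last unit, and this is exactly where the hypothesis $n\geq 3$ enters. If $m\geq 3$, then $(m-1)(\delta-1)\geq 2(\delta-1)\geq \delta$, because $\delta\geq 2$, and the bound improves to $\geq d$. If $m=2$, then $n\geq 3$ guarantees an index $i>m$, and the corresponding term $e_i-1\geq 1$ adds to the sum. In both cases $\sum_{i=2}^n(e_i-1)\geq d$, as claimed.

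As a cross-check, with $r=0$ and $b=0$ Riemann--Hurwitz gives $\sum_{i=2}^n(e_i-1)=2g+d-1$. So the statement is equivalent to $g\geq 1$, that is, to ruling out rational $C$. The argument above never uses the hypothesis $r=0$, so I will also verify that nothing is lost there.
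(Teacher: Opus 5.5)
Your proposal is correct and follows essentially the same route as the paper. Both arguments use $g'\geq 0$ in the genus formula of Proposition \ref{prop:composed}(i), isolate the term coming from $e_1=d$ to get $\sum_{i=2}^m(e_i-1)\geq d-1+(m-2)(\delta-1)$, and then split into $m\geq 3$ versus $m=2$, where $n\geq 3$ supplies the missing unit. You are also right that the hypothesis $r=0$ plays no role, and the paper's proof does not use it either.
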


\begin{proof}
  By Remark \ref{rem:composed2}, we have
  $m:=\#\{ e_i : \delta|e_i\} \geq 2$,
  and the ramification point of order $d$ must be among the $m$ points where $\phi$ is totally ramified.

  Inserting into the formula for $g'$ in Proposition \ref{prop:composed}, we obtain, after relabelling some indices,
  \begin{eqnarray*}
    g' & = & 1-\frac{d}{\delta}+\frac{1}{2}\left[\left(\frac{d}{\delta}-1\right)
+\displaystyle\sum_{i=2}^{m}\left(\frac{e_i}{\delta}-1\right)\right]\\
    & = & \frac{1}{2\delta}\left[2\delta-2d+d-\delta+ \displaystyle\sum_{i=2}^{m}\left(e_i-\delta\right)\right]
\\
    & = & \frac{1}{2\delta}\left[-d-(\delta-1)(m-2)+1+\displaystyle\sum_{i=2}^{m}(e_i-1)\right],
  \end{eqnarray*}
  whence
  \[ \displaystyle\sum_{i=2}^{m}(e_i-1) \geq d+(\delta-1)(m-2)-1.\]
  If $m=2$, we have $n>m$, whence
  \[ \displaystyle\sum_{i=2}^{n}(e_i-1) \geq \displaystyle\sum_{i=2}^{m}(e_i-1) +1 \geq (d-1)+1=d,\]
  as desired.
  If $m>2$, then 
 \[ \displaystyle\sum_{i=2}^{m}(e_i-1) \geq d+(\delta-1)-1=d+\delta-2\geq d.\]
  \end{proof}

 \section{Components of the Hurwitz scheme of simple   type} \label{sec:scomp}

In this section we will prove Theorems \ref{thm:simple2}--\ref{thm:simple01}.
 We will prove  them  by constructing, for any $(g,b,d;\e)$ satisfying the conditions of Proposition \ref{prop:composed}, a member of the border of $\H_{g \to b,d;\e}$ not lying in the border of
 $\H^{\delta}_{g \to b,d;\e}$ for any  admissible factor $\delta$.

\subsection{ Preliminaries} To simplify notation we will mark all remaining simple ramification points, thus setting $r(g,b,d;\e)=0$. Conditions $(*)_0$ now read
\begin{equation}
  \label{eq:st0}  b=0, \;\; n \geq 4, \;\; \#\{e_i: e_i \neq d\} \geq 2 \;\; \mbox{and} \;\; \displaystyle\sum_{e_i \neq d}(e_i-1) \geq \Delta(g,0,d;\e),
 \end{equation}
where we recall that $\Delta(g,b,d;\e)$ is the largest admissible factor of
$(g,b,d;\e)$.
Conditions $(*)_1$ read
\begin{equation}
 \label{eq:st1'} b=1, \;\; n \geq 2, \;\; \;\; \mbox{and} 
     \;\; 2g-2 \geq \Delta(g,1,d;\e).
 \end{equation}

We will need to make a couple of observations regarding these conditions. The first one proves what we claimed in Remark \ref{rem:cond*} in the introduction:

\begin{lemma} \label{lemma:cond*}
  For $b=0$, condition \eqref{eq:st0} is automatically satisfied if $n \geq 4$
  and \linebreak $\#\{e_i: e_i = d\} \leq 1$. More precisely, in this case we may find $i_1 \neq i_2$ such that $e_{i_1},e_{i_2} \neq d$ and 
\begin{equation} \label{eq:cond*}
 \Delta(g,0,d;\e) \leq  e_{i_1}+e_{i_2}-2 \leq \displaystyle\sum_{i \neq i_1,i_2}(e_i-1). 
\end{equation}
\end{lemma}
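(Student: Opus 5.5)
The plan is to first translate the existence of the admissible factor $\Delta:=\Delta(g,0,d;\e)$ into concrete information on the $e_i$, and then choose the pair $i_1,i_2$ so that both inequalities in \eqref{eq:cond*} can be checked. Throughout, $r=0$, since the simple ramification points are marked, and $b=0$. Under these conventions, Riemann--Hurwitz gives $\sum_{i=1}^n(e_i-1)=2g-2+2d$.

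\emph{Step 1: information from $\Delta$.} By Remark \ref{rem:composed2}, the set $I:=\{i : \Delta\mid e_i,\ \Delta<e_i\}$ has at least two elements. Since at most one $e_i$ equals $d$, some $j\in I$ has $e_j\neq d$, and hence $2\Delta\leq e_j\leq d-\Delta$. Since $n\geq 4$ and at most one $e_i=d$, at least three indices have $e_i\neq d$. This already gives the qualitative part of \eqref{eq:st0}: $\#\{e_i\neq d\}\geq 2$, and $\sum_{e_i\neq d}(e_i-1)\geq (e_j-1)+2\geq 2\Delta+1>\Delta$. It therefore remains to prove the sharper statement \eqref{eq:cond*}.

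\emph{Step 2: a transitivity inequality.} I will use the classical fact that for a connected cover of $\PP^1$, deleting any one branch point leaves enough ramification to keep the monodromy transitive. Concretely, for every $k$,
\[ \sum_{i\neq k}(e_i-1)\geq d-1 .\]
This holds because the local monodromy permutations other than $\sigma_k$ still generate a transitive subgroup of $S_d$, since $\sigma_k$ is the inverse of their product. A product of cycles of lengths $e_i$ that generates a transitive group must satisfy $\sum(e_i-1)\geq d-1$.

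\emph{Step 3: choice of the pair.} Among the indices with $e_i\neq d$, I take $i_1,i_2$ with the two smallest values of $e_i$, provided that $e_{i_1}+e_{i_2}-2\geq\Delta$. Otherwise, I replace $i_2$ by the index $j$ of Step 1, so that $e_{i_1}+e_j-2\geq 2\Delta\geq\Delta$. The first inequality of \eqref{eq:cond*} then holds by construction.

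\emph{Step 4: the second inequality (main obstacle).} We need the remaining indices to carry at least $e_{i_1}+e_{i_2}-2$. There is at least one further index with $e\neq d$, and possibly one index with $e=d$. In the first case of Step 3, the leftover non-$d$ indices each have $e\geq e_{i_2}\geq e_{i_1}$. If there are at least two of them, the inequality is immediate. If a total ramification point is present, it contributes $d-1\geq e_{i_1}+e_{i_2}-2$. In the second case, I will compare with the $e_i$ excluded because they are small, and use Step 2 with $k=j$: it gives $\sum_{i\neq j}(e_i-1)\geq d-1$, and combining this with $e_j\leq d-\Delta$ and $e_{i_1}<\Delta+2-e_j$ should force the rest to be large. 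The hardest bookkeeping is this case analysis, specifically when $n=4$ with no total ramification point; there I expect to argue directly with the parity and divisibility constraints from Definition \ref{def:adm} on the values in $I$.
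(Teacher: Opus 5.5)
\textbf{There is a genuine gap, in Step 4.} Steps 1--3 are fine. Step 1 already proves the first assertion, that \eqref{eq:st0} holds. Step 2 is correct, but it needs no monodromy: Riemann--Hurwitz gives directly $\sum_{i\neq k}(e_i-1)=2g-2+2d-(e_k-1)\geq 2g+d-1$. Step 4 is the actual content of the right-hand inequality in \eqref{eq:cond*}, and as written it is not a proof.

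\emph{First case.} Your claim that a total ramification point ``contributes $d-1\geq e_{i_1}+e_{i_2}-2$'' is false. Take $d=12$, $\e=(12,10,10,8)$, $g=7$. Then $r=0$ and $\Delta=2$, since $3,4,6$ all fail on $e_i=10$. The two smallest non-$d$ values give $e_{i_1}+e_{i_2}-2=16>11=d-1$. This is exactly your subcase: only one leftover non-$d$ index, plus a total point. The subcase can be saved. Because $n\geq 4$, besides the total point there is a third index $k$ with $e_k\neq d$ and $e_k\geq e_{i_2}$. Hence $(d-1)+(e_k-1)\geq (e_{i_1}-1)+(e_{i_2}-1)$.

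\emph{Second case.} The inequality $e_{i_1}<\Delta+2-e_j$ is wrong: with $e_j\geq 2\Delta$ it would force $e_{i_1}<2-\Delta$. The case hypothesis is $e_{i_1}+e_{i_2}<\Delta+2$, with $i_2$ the \emph{original} second-smallest index. You also leave the case at ``should force the rest to be large'' and expect a parity/divisibility analysis when $n=4$. None is needed. Step 2 with $k=j$ gives $\sum_{i\neq i_1,j}(e_i-1)\geq d-e_{i_1}$. Moreover $d-e_{i_1}\geq e_{i_1}+e_j-2$, because $e_j\leq d-\Delta$ and $2e_{i_1}\leq e_{i_1}+e_{i_2}\leq\Delta+1$. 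With these two repairs your argument is complete.

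\textbf{How the paper avoids this.} It chooses the pair so that no case split is needed. It takes $e_{i_1}$ to be the smallest $e_i\neq d$ divisible by $\Delta$, and $e_{i_2}$ the smallest $e_i$ with $i\neq i_1$. The left inequality is then automatic, since $e_{i_1}\geq\Delta$ and $e_{i_2}\geq 2$. For the right one, pair the indices off:
\begin{itemize}
\item Since at least two indices are divisible by $\Delta$ and exceed it, there is $i_3\notin\{i_1,i_2\}$ with $\Delta\mid e_{i_3}$, hence $e_{i_3}\geq e_{i_1}$.
\item Since $n\geq 4$, there is a fourth index $i_4$ with $e_{i_4}\geq e_{i_2}$.
\end{itemize}
This uses only Remark \ref{rem:composed2} and $n\geq 4$, whereas your fallback case needs the Riemann--Hurwitz input of Step 2.
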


\begin{proof}
  Set for simplicity $\delta=\Delta(g,0,d;\e)$. Recall that at least two of the  $e_i$ are divisible by $\delta$, by Remark \ref{rem:composed2}. Therefore, at least one $e_i$ satisfies $e_i \neq d$ and $\delta|e_i$.  We may then define
  \begin{eqnarray*}
    e_{i_1} & := & \min\{e_i\;:\; \delta|e_i \;\mbox{and}\; e_i \neq d\}, \\
    e_{i_2} & := & \min\{e_i\;:\; i\neq i_1\},
  \end{eqnarray*}
  and both $e_{i_1}, e_{i_2} \neq d$. By choice of $e_{i_1}$,  since $b=0$ and therefore $m\geq 2$ (see Remark \ref {rem:composed2}),  there exists some $i_3 \not \in \{i_1,i_2\}$ such that
  $\delta|e_{i_3}$ and $e_{i_3} \geq e_{i_1}$.  
  Since $n \geq 4$, there exists, by choice of $e_{i_2}$, some $i_4 \not \in \{i_1,i_2,i_3\}$ such that $ e_{i_4} \geq e_{i_2}$. Therefore,
  \[ e_{i_1}+e_{i_2}-2 \leq (e_{ i_3}-1)+(e_{i_4}-1) \leq \displaystyle\sum_{i\neq i_1,i_2}(e_i-1).\]
   This shows that the right hand inequality of \eqref{eq:cond*} is satisfied. 
  Finally, since $\delta|e_{i_1}$, we may write $e_{i_1}=a\delta$ for some integer $a \geq 1$.  Then
  \[ e_{i_1}+ e_{i_2}= a\delta+e_{i_2} \geq a\delta+2 \geq \delta+2,\]
   which proves the left hand inequality of \eqref{eq:cond*}.
\end{proof}

Regarding \eqref{eq:st1'}, we note that if it is satisfied with $2g-2<d$, then, since by the Riemann-Hurwitz formula $2g-2=\displaystyle\sum_{i=1}^n(e_i-1)$, we must have $e_i \neq d$ for all $i$. In particular, we may (and will find it convenient to)  split \eqref{eq:st1'}  into the two cases:
\begin{equation}
 \label{eq:st1} b=1, \;\; \#\{e_i: e_i \neq d\} \geq 2 \;\; \mbox{and} \;\; \displaystyle\sum_{e_i \neq d}(e_i-1) \geq \Delta(g,1,d;\e).
\end{equation}
or
\begin{equation}
 \label{eq:st2} b=1, \;\; n \geq 2, \;\; \;\; \mbox{and} 
     \;\; 2g-2 \geq d.
 \end{equation}

We will now separate the proofs of Theorems \ref{thm:simple2}--\ref{thm:simple01} into two parts. Firstly, we assume $b \in \{0,1\}$ and prove parts of Theorem \ref{thm:simple01} under the assumption \eqref{eq:st0} for $b=0$ and \eqref{eq:st1} for $b=1$. Then we prove the remaining part of Theorem \ref{thm:simple01}, which is the case $b=1$ under assumption \eqref{eq:st2}, and Theorem \ref{thm:simple2} (where $b \geq 2$).

\subsection{The proof of Theorem \ref{thm:simple01} under assumption
  \eqref{eq:st0}
for $b=0$ and \eqref{eq:st1} for $b=1$}

We start with a couple of helpful results.

 \begin{lemma} \label{lemma:firstred}
   Assume that $b=0$ (respectively, $b=1$) and \eqref{eq:st0} (resp., \eqref{eq:st1}) holds. Then, there is an integer $s$ with $2 \leq s \leq n-2$ (resp., $2 \leq s$) such that, after relabelling the $e_i$s, conditions (i)-(iii) (resp., (i)-(ii)) below are satisfied: 
   \begin{itemize}
   \item[(i)] $e_i \neq d$ for $i \in \{1,\ldots,s\}$,
   \item[(ii)] $\displaystyle\sum_{i=1}^s(e_i-1) \geq \Delta(g,b,d;\e)$,
   \item[(iii)] either \; $\displaystyle\sum_{i=1}^s(e_i-1) \leq \displaystyle\sum_{i=s+1}^n(e_i-1)$ \; or \; $2d-2 \leq \displaystyle\sum_{i=s+1}^n(e_i-1)$.      
   \end{itemize}
 \end{lemma}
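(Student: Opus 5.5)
The plan is to take $s$ to be essentially ``all the indices with $e_i\neq d$'' whenever possible, and to fall back on Lemma \ref{lemma:cond*} in the remaining case. Throughout, $r=0$, since all simple ramification points have been marked, as in the preliminaries.

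\emph{Case $b=1$.} Here only (i) and (ii) are required. By \eqref{eq:st1} the set $I:=\{i : e_i\neq d\}$ has $k:=\#I\geq 2$ elements. I relabel so that $I=\{1,\ldots,k\}$ and set $s:=k$. Then (i) holds by construction, and (ii) is exactly the last condition in \eqref{eq:st1}. Nothing more is needed.

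\emph{Case $b=0$.} Let $t:=\#\{i : e_i=d\}$ and $k:=n-t$, so $k\geq 2$ by \eqref{eq:st0}. I split according to $t$.

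If $t\geq 2$, I relabel so that $e_1,\ldots,e_k$ are the $e_i\neq d$ and take $s:=k$. Then (i) is clear and (ii) is the last condition of \eqref{eq:st0}. Since $n=k+t\geq s+2$, we get $2\leq s\leq n-2$. The indices $s+1,\ldots,n$ are exactly those with $e_i=d$, and there are at least two of them, so
\[
\sum_{i=s+1}^n(e_i-1)\geq 2(d-1)=2d-2 .
\]
This is the second alternative in (iii).

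If $t\leq 1$, then, since $n\geq 4$ by \eqref{eq:st0}, we are in the situation of Lemma \ref{lemma:cond*}. It provides indices $i_1\neq i_2$ with $e_{i_1},e_{i_2}\neq d$ satisfying \eqref{eq:cond*}. I relabel so that $\{i_1,i_2\}=\{1,2\}$ and set $s:=2$. Then $2\leq s\leq n-2$ because $n\geq 4$, and (i) holds. The left inequality of \eqref{eq:cond*} reads $\Delta(g,0,d;\e)\leq (e_1-1)+(e_2-1)$, which is (ii). The right inequality is the first alternative of (iii).

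There is no real obstacle once the problem is split by the number $t$ of totally ramified points. The only point needing care is to check that Lemma \ref{lemma:cond*} applies exactly when $t\leq 1$; the combinatorial work (finding $i_3,i_4$ using $m\geq 2$ from Remark \ref{rem:composed2}) has already been done there. The case $t\geq 2$ is handled by the crude bound $2d-2$ from the two leftover totally ramified points.
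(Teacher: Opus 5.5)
Your proof is correct and follows the paper's argument essentially verbatim. The case $b=1$ is immediate from \eqref{eq:st1}. For $b=0$ you split on whether at most one or at least two $e_i$ equal $d$: in the first case you invoke Lemma \ref{lemma:cond*} to get the left alternative of (iii), and in the second you take $s$ to be all indices with $e_i\neq d$ to get the right alternative.
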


 \begin{proof}
   If $b=1$, then the result is an immediate consequence of the conditions in \eqref{eq:st1}. We therefore assume that $b=0$ in the rest of the proof.

 If $\#\{e_i: e_i = d\} \leq 1$, then the result follows from Lemma \ref{lemma:cond*}, with the left hand option in (iii) being fulfilled. 

  Assume finally that $\#\{e_i: e_i = d\} \geq 2$. We label all $e_i$ such that $e_i \neq d$ by $e_1,\ldots, e_s$. Then \eqref{eq:st0} yields $2 \leq s \leq n-2$ and (i)-(ii). Moreover, as
 $\#\{e_i: e_i = d\} \geq 2$, we have $\displaystyle\sum_{i=s+1}^n(e_i-1)\geq 2(d-1)$, which is the right hand alternative of (iii).  
 \end{proof}

 We  will   need the following additional property:

 \begin{lemma} \label{lemma:firstred_min}
   Assume that $s$ and $e_1,\ldots,e_s$ are as in Lemma \ref{lemma:firstred}, with $s$ {\it minimal}.

   If $\displaystyle\sum_{i=1}^s(e_i-1)\geq d$, then $s=2$.
 \end{lemma}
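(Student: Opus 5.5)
The plan is to argue by contradiction against the minimality of $s$. Assume $s\geq 3$ and $\displaystyle\sum_{i=1}^s(e_i-1)\geq d$. I will produce a smaller integer $s-1$ that still satisfies the conclusions of Lemma \ref{lemma:firstred}, after a suitable relabelling. This contradicts the minimality of $s$.

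\emph{Step 1: relabel and drop one index.} Relabel $e_1,\ldots,e_s$ so that $e_s-1$ is the smallest of the numbers $e_i-1$ for $i\in\{1,\ldots,s\}$. The candidate is then $s-1$, with the same first $s-1$ indices. For the range condition: we have $s-1\geq 2$ because $s\geq 3$, and when $b=0$ we have $s-1\leq n-3\leq n-2$. Condition (i) is inherited trivially, since $e_i\neq d$ for all $i\leq s-1$.

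\emph{Step 2: check (ii).} Since $e_s-1$ is the smallest of the $s$ summands, it is at most their average. Hence
\[
\sum_{i=1}^{s-1}(e_i-1)\;\geq\;\frac{s-1}{s}\sum_{i=1}^{s}(e_i-1)\;\geq\;\frac{2}{3}\,d .
\]
On the other hand, $\Delta(g,b,d;\e)$ is a proper divisor of $d$ by Definition \ref{def:adm}, so $\Delta(g,b,d;\e)\leq d/2<2d/3$. Therefore (ii) holds for $s-1$. This is the only place where the hypothesis $\sum_{i=1}^s(e_i-1)\geq d$ is used, and it is the key point of the argument. It explains why the hypothesis is needed: without it, dropping a term could make the sum fall below $\Delta$.

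\emph{Step 3: check (iii) when $b=0$.} Passing from $s$ to $s-1$ moves the term $e_s-1\geq 1$ from the left sum to the right sum. The left sum therefore decreases and the right sum $\sum_{i=s}^n(e_i-1)$ increases. Suppose the left alternative held for $s$. Then
\[
\sum_{i=1}^{s-1}(e_i-1)\leq\sum_{i=1}^{s}(e_i-1)\leq\sum_{i=s+1}^n(e_i-1)\leq\sum_{i=s}^n(e_i-1),
\]
so the left alternative holds for $s-1$. Suppose instead the right alternative held for $s$. Then
\[
2d-2\leq\sum_{i=s+1}^n(e_i-1)\leq\sum_{i=s}^n(e_i-1),
\]
so the right alternative holds for $s-1$. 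When $b=1$ there is no condition (iii) to check.

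\emph{Conclusion.} In either case $s-1$ satisfies all the requirements of Lemma \ref{lemma:firstred}, contradicting the minimality of $s$. Hence $s=2$. I do not expect any genuine obstacle: Step 2 is the only nontrivial point, and it reduces to the bound $\Delta(g,b,d;\e)\leq d/2$.
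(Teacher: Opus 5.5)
Your proof is correct and has the same skeleton as the paper's. Both assume $s\geq 3$, observe that (i) and (iii) survive passing to a sub-collection of $e_1,\ldots,e_s$, and use $\Delta(g,b,d;\e)\leq d/2$ to contradict minimality. The only difference is how (ii) is secured for a smaller collection: you drop the smallest summand and use $\frac{s-1}{s}\sum_{i=1}^s(e_i-1)\geq \frac{2}{3}d$, whereas the paper argues that if every $(s-1)$-subset failed (ii) then each $e_*\geq \frac{d}{2}+2$, so any pair already satisfies (ii). Your version is slightly more direct.
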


 \begin{proof}
   Assume that $s \geq 3$. Any  subset  of $s-1$ elements of $\{e_1,\ldots,e_s\}$ still satisfy conditions (i) and (iii) of Lemma \ref{lemma:firstred}. The minimality of $s$ thus implies that for any $e_* \in \{e_1,\ldots,e_s\}$, we have $\displaystyle\sum_{i=1}^s(e_i-1) -(e_*-1) \leq \Delta(g,b,d;\e)-1$. Therefore,
   \[ e_* \geq \displaystyle\sum_{i=1}^s(e_i-1)-\Delta(g,b,d;\e)+2 \geq
   d-\Delta(g,b,d;\e)+2 \geq d-\frac{d}{2}+2=\frac{d}{2}+2,\]
   where we have used the fact that $\Delta(g,b,d;\e) \leq \frac{d}{2}$.
   In particular,
   \[ (e_1-1)+(e_2-1) \geq 2\left(\frac{d}{2}+1\right)=d+2>\Delta(g,b,d;\e),\]
   contradicting the minimality of $s$.
 \end{proof}

Pick now $e_1,\ldots,e_s$ as in Lemma \ref{lemma:firstred}, with $s$ {\it minimal}. We  define:

\begin{equation} \label{eq:defalfa}
  \alpha:=\begin{cases}

  \displaystyle\sum_{i=1}^s(e_i-1) +1, & \mbox{if} \;\; d \geq \displaystyle\sum_{i=1}^s(e_i-1)+1, \\
  d-1, & \mbox{if} \;\; d \leq  \displaystyle\sum_{i=1}^s(e_i-1) \;\; \mbox{and} \;\; \displaystyle\sum_{i=1}^s(e_i-1) +d \;\; \mbox{is even}, \\
  d-2, & \mbox{if} \;\; d \leq  \displaystyle\sum_{i=1}^s(e_i-1) \;\; \mbox{and} \;\; \displaystyle\sum_{i=1}^s(e_i-1) +d \;\; \mbox{is odd}.
  \end{cases}
\end{equation}

\begin{remark} \label{rem:minimals}
In the two lower cases of \eqref{eq:defalfa}, we have $s=2$ by Lemma \ref{lemma:firstred_min}.
 \end{remark}

\begin{lemma} \label{lemma:condalpha}
  The integer $\alpha$ in \eqref{eq:defalfa} satisfies
  \begin{eqnarray}
\label{eq:A7} & \displaystyle\sum_{i=1}^s(e_i-1)+\alpha \;\; \mbox{is odd}, & \\
    \label{eq:A5} & \max\{e_1,\ldots,e_s\} \leq \alpha \leq d, & \\
        \label{eq:A100}     & \alpha>\Delta(g,b,d;\e), &  \\
        \label{eq:A80} & \alpha \leq \displaystyle\sum_{i=1}^s(e_i-1)+1, & \\
\label{eq:A8} &  \alpha \geq 2d-\displaystyle\sum_{i=s+1}^n(e_i-1)-1, \;\;\mbox{if} \;\; b=0.
\end{eqnarray}
\end{lemma}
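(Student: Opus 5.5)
We prove each of the five properties by going through the three cases of \eqref{eq:defalfa}. Throughout we write $S:=\sum_{i=1}^s(e_i-1)$ and $T:=\sum_{i=s+1}^n(e_i-1)$, and we set $\Delta:=\Delta(g,b,d;\e)$. Besides Lemma \ref{lemma:firstred} and Remark \ref{rem:minimals}, we use three facts.
\begin{itemize}
\item[(a)] $\Delta\le d/2$, since $\Delta$ is a proper divisor of $d$.
\item[(b)] $d\ge 4$.
\item[(c)] $e_i\neq d-1$ for all $i$. Indeed, $\Delta$ is admissible, so Definition \ref{def:adm} requires $\Delta\mid e_i$ or $e_i<\Delta$. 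But $\Delta\nmid d-1$ because $\Delta\mid d$ and $\Delta>1$. Also $d-1\ge \Delta$ by (a).
\end{itemize}

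\emph{Property \eqref{eq:A7}.} In the first case $S+\alpha=2S+1$, which is odd. In the second case $S+d$ is even, so $S+d-1$ is odd. In the third case $S+d$ is odd, so $S+d-2$ is odd.

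\emph{Property \eqref{eq:A80}.} In the first case it is an equality. In the other two cases $\alpha\le d-1\le S<S+1$, using the case hypothesis $d\le S$.

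\emph{Property \eqref{eq:A5}.}
\begin{itemize}
\item The bound $\alpha\le d$ is the case hypothesis in the first case, and it is obvious in the other two.
\item First case, lower bound: for each $j\le s$ we have $e_j-1\le S$, so $e_j\le S+1=\alpha$.
\item Second and third cases, lower bound: Remark \ref{rem:minimals} gives $s=2$. Each $e_i$ with $i\le s$ satisfies $e_i\neq d$ by Lemma \ref{lemma:firstred}(i), and $e_i\neq d-1$ by (c). Hence $e_i\le d-2\le\alpha$.
\end{itemize}
Point (c) is the only non-obvious input here.

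\emph{Property \eqref{eq:A100}.}
\begin{itemize}
\item First case: $\alpha=S+1>S\ge\Delta$ by Lemma \ref{lemma:firstred}(ii).
\item Second case: $\alpha=d-1>d/2\ge\Delta$.
\item Third case: $\alpha=d-2$, and $d-2>d/2$ holds exactly when $d>4$. So by (b) we must exclude $d=4$. If $d=4$, then $s=2$ and $e_1,e_2\notin\{3,4\}$ by (c) and Lemma \ref{lemma:firstred}(i), so $e_1=e_2=2$. This gives $S=2<d$, contradicting the third-case hypothesis $d\le S$.
\end{itemize}
I expect this boundary case to be the main (small) obstacle.

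\emph{Property \eqref{eq:A8}}, where $b=0$.
\begin{itemize}
\item First case: with all ramification marked, Riemann--Hurwitz gives $S+T=2g-2+2d$. Hence $2d-T-1=S+1-2g\le S+1=\alpha$, since $g\ge 0$.
\item Other two cases, right-hand alternative of Lemma \ref{lemma:firstred}(iii): if $T\ge 2d-2$, then $2d-T-1\le 1\le\alpha$.
\item Other two cases, left-hand alternative $S\le T$: then $2d-T-1\le 2d-S-1$. In the second case $S\ge d$ gives $2d-S-1\le d-1=\alpha$. In the third case $S+d$ is odd, so $S\neq d$; combined with $S\ge d$ this gives $S\ge d+1$. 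Hence $2d-S-1\le d-2=\alpha$.
\end{itemize}
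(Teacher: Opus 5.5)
Your proof is correct and follows essentially the same case-by-case argument as the paper: Riemann--Hurwitz gives \eqref{eq:A8} in the first case, the two alternatives of Lemma \ref{lemma:firstred}(iii) give it in the other cases, and $e_i\neq d-1$ gives \eqref{eq:A5}. The differences are minor: you derive $e_i\neq d-1$ directly from the admissibility of $\Delta$ instead of citing Corollary \ref{cor:claimC}, and for \eqref{eq:A100} in the third case you use $d-2>d/2$ plus a direct check at $d=4$, where the paper instead reduces to $\Delta=2$ and derives a parity contradiction.
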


\begin{proof}
  We treat the three cases in \eqref{eq:defalfa} separately.

    {\bf Case I: $\alpha=\displaystyle\sum_{i=1}^s(e_i-1) +1$ and $d \geq  \displaystyle\sum_{i=1}^s(e_i-1)+1$.}

  Properties \eqref{eq:A7}, \eqref{eq:A5} and \eqref{eq:A80} are obviously satisfied. Moreover, \eqref{eq:A100} is satisfied by Lemma \ref{lemma:firstred}(ii). 

   When $b=0$,  Riemann-Hurwitz yields 
  \[ \displaystyle\sum_{i=1}^s(e_i-1)+\displaystyle\sum_{i=s+1}^n(e_i-1) -2d=2(g-1) \geq -2, \]
  whence
  \[ 2d-\displaystyle\sum_{i=s+1}^n(e_i-1) \leq \displaystyle\sum_{i=1}^s(e_i-1)+2=\alpha +1,\]
  whence  \eqref{eq:A8} is satisfied.

  {\bf Case II: $\alpha=d-1$, $d \leq  \displaystyle\sum_{i=1}^s(e_i-1)$ and $\displaystyle\sum_{i=1}^s(e_i-1) +d$ is even.}

   Properties \eqref{eq:A7} and \eqref{eq:A80} are  automatically satisfied. Corollary \ref{cor:claimC}  yields that \eqref{eq:A5} is satisfied.  Condition \eqref{eq:A100} is clearly satisfied, as any admissible factor is $<d$.

  We have left to prove \eqref{eq:A8}, when $b=0$. We consider the two possibilities in Lemma \ref{lemma:firstred}(iii):
 \begin{itemize}
 \item If $\displaystyle\sum_{i=1}^s(e_i-1) \leq \displaystyle\sum_{i=s+1}^n(e_i-1)$, then, by the assumption that $d \leq  \displaystyle\sum_{i=1}^s(e_i-1)$ in this case, we have $2d-\displaystyle\sum_{i=s+1}^n(e_i-1)\leq 2d-\displaystyle\sum_{i=1}^s(e_i-1) \leq 2d-d=d=\alpha+1$.
 \item If $2d-2 \leq \displaystyle\sum_{i=s+1}^n(e_i-1)$, then
 $2d-\displaystyle\sum_{i=s+1}^n(e_i-1)\leq 2<d=\alpha+1$. 
 \end{itemize}
 Hence, \eqref{eq:A8} is satisfied.

 {\bf Case III: $\alpha=d-2$, $d \leq  \displaystyle\sum_{i=1}^s(e_i-1)$ and $\displaystyle\sum_{i=1}^s(e_i-1) +d$ is odd.}

 Properties \eqref{eq:A7} and \eqref{eq:A80} are automatically satisfied. Lemma \ref{lemma:firstred}(i)  and Corollary \ref{cor:claimC} imply that $e_1,\ldots,e_s \leq d-2$, whence \eqref{eq:A5} is satisfied.

  If \eqref{eq:A100} is not satisfied, then $d-2 \leq \Delta(g,b,d;\e)$, and since the latter is a proper divisor of $d$, we must have $\Delta(g,b,d;\e)=2$, whence $d$ and all $e_i$  with $i\leq s$  are even. But $s=2$ by Remark \ref{rem:minimals}, whence $\displaystyle\sum_{i=1}^s(e_i-1) +d$  is even, contradicting the assumptions of this case.

  We have left to prove \eqref{eq:A8}, when $b=0$. We consider the two possibilities in Lemma \ref{lemma:firstred}(iii):
 \begin{itemize}
 \item If $\displaystyle\sum_{i=1}^s(e_i-1) \leq \displaystyle\sum_{i=s+1}^n(e_i-1)$, then, by the assumption that $d \leq  \displaystyle\sum_{i=1}^s(e_i-1)$ in this case, we have $2d-\displaystyle\sum_{i=s+1}^n(e_i-1)\leq 2d-\displaystyle\sum_{i=1}^s(e_i-1) \leq 2d-d=d=\alpha+2$. If equality holds, then
   $\displaystyle\sum_{i=1}^s(e_i-1)=d$, whence  $\displaystyle\sum_{i=1}^s(e_i-1)+d=2d$ is even, a contradiction on the assumption in this case. Hence,
$2d-\displaystyle\sum_{i=s+1}^n(e_i-1) \leq \alpha+1$, which is \eqref{eq:A8}.  
\item If $2d-2 \leq \displaystyle\sum_{i=s+1}^n(e_i-1)$, then
 $2d-\displaystyle\sum_{i=s+1}^n(e_i-1)\leq 2<d-1=\alpha+1$, which yields \eqref{eq:A8}. 
 \end{itemize}
 
 We conclude that \eqref{eq:A8} is satisfied. 
  \end{proof}

 Set
\begin{eqnarray*}
  g_1 & = & \frac{\displaystyle\sum_{i=1}^s(e_i-1)-\alpha+1}{2}, \\
  g_2 & = & d(b-1)+\frac{\displaystyle\sum_{i=s+1}^n(e_i-1)+\alpha+1}{2}.
\end{eqnarray*}
Condition \eqref{eq:A7} guarantees that $g_1 \in \ZZ$ and \eqref{eq:A80} guarantees that $g_1 \geq 0$. Condition \eqref{eq:A7} together with the fact that $\displaystyle\sum_{i=1}^n(e_i-1)$ is even by the Riemann-Hurwitz formula, guarantees that $\displaystyle\sum_{i=s+1}^n(e_i-1)+\alpha$ is odd as well, so that $g_2 \in \ZZ$.  Condition \eqref{eq:A8}  guarantees that $g_2 \geq 0$. Condition \eqref{eq:A5} and Proposition \ref{prop:dimhur} now  yield  the existence of two covers
\[
  [f_1:C_1 \to B_1 \cong \PP^1]   \in \H_{g_1\to 0,\alpha;e_1,\ldots,e_s,\alpha}\]
  and
  \[
    [f_2:C_2 \to B_2] \in  \H_{g_2\to b,d;\alpha,e_{s+1}, \ldots,e_n}.\] 
Both covers have a  marked  ramification point of order $\alpha$. We glue the two curves $C_1$ and $C_2$ along this point and call it $p$. We have
\[ f_2^*(f_2(p))=\alpha p+p_1+\cdots+p_{d-\alpha} \;\; \mbox{on} \;\; C_2.\]
Attach one copy of $B_1 \cong \PP^1$ to $C_2$ at each $p_i$, so that it intersects no other copy of $B_1$ and is disjoint from $C_1$. Define
\begin{eqnarray*}
  X_1 & := & C_1+\mbox{the $(d-\alpha)$ disjoint copies of $B_1$}, \\
  X   & := & X_1+C_2, 
\end{eqnarray*}
so that $X$ looks like this, where we have marked the  $s$  ramification points on $C_1$ other than $p$ by $x_1,\ldots,x_s$,  and the  $n-s$  ramification points on $C_2$ other than $p$ by $ x_{s+1},\ldots,x_n$:

\begin{center}
\begin{tikzpicture}[scale=0.85]

\draw [thick,red] (3.5,-4.5) .. controls (5,-2) and (6,-1) .. (9,1)
node [pos=0.65, circle, fill, inner sep=1.5pt] {}
        node [pos=0.77, circle, fill, inner sep=1.5pt] {}
        node [pos=0.83, circle, fill, inner sep=1pt] {}
        node [pos=0.865, circle, fill, inner sep=1pt] {}
        node [pos=0.9, circle, fill, inner sep=1pt] {}
        node [pos=0.95, circle, fill, inner sep=1.5pt] {};

\node at (3.95,-4.25) {\small $p$};
\node at (4.75,-3.15) {\small $p_1$};
\node at (5.15,-2.6) {\small $p_2$};
\node at (6.77,-1.2) {\small $p_{d-\alpha}$};
\node[red] at (7.03,-0.85) {\small $x_{s+1}$};
\node[red] at (7.7,-0.4) {\small $x_{s+2}$};
\node[red] at (8.88,0.6) {\small $x_{n}$};
\node[red] at (8.7,1.2) {\small $C_2$};

  \draw [thick,blue,xshift=-2.2,yshift=-18] (0,0) .. controls (2,-1) and (3,-2) .. (4,-4)
        node [pos=0.3, circle, fill, inner sep=1.5pt] {}
        node [pos=0.4, circle, fill, inner sep=1.5pt] {}
        node [pos=0.5, circle, fill, inner sep=1pt] {}
        node [pos=0.55, circle, fill, inner sep=1pt] {}
        node [pos=0.6, circle, fill, inner sep=1pt] {}
        node [pos=0.7, circle, fill, inner sep=1.5pt] {}
        node [pos=0.8, circle, fill, inner sep=1.5pt] {}
        node [pos=0.925, circle, fill=black, inner sep=1.5pt] {};

\node[blue] at (3.2,-3.8) {\small $x_1$};        
\node[blue] at (2.9,-3.3) {\small $x_2$};        
\node[blue] at (1.6,-2.2) {\small $x_{s-1}$};        
\node[blue] at (1.2,-1.6) {\small $x_s$};        
\node[blue] at (0,-1) {\small $C_1$};        

\draw[blue,stealth-stealth] (1.5,0)  -- node[left]{\small $(d-\alpha)B_1$} (3.2,2.2);

\draw [thick,blue,xshift=19,yshift=12] (1.3,-0.7) .. controls (2,-1) and (3,-2) .. (4,-4)
        node [pos=0.3, circle, fill, inner sep=1.5pt] {}
        node [pos=0.4, circle, fill, inner sep=1.5pt] {}
        node [pos=0.5, circle, fill, inner sep=1pt] {}
        node [pos=0.55, circle, fill, inner sep=1pt] {}
        node [pos=0.6, circle, fill, inner sep=1pt] {}
        node [pos=0.7, circle, fill, inner sep=1.5pt] {}
        node [pos=0.8, circle, fill, inner sep=1.5pt] {}
        node [pos=0.915, circle, fill=black, inner sep=1.5pt] {};

\draw [thick,blue,xshift=30,yshift=29] (1.3,-0.7) .. controls (2,-1) and (3,-2) .. (4,-4)
        node [pos=0.3, circle, fill, inner sep=1.5pt] {}
        node [pos=0.4, circle, fill, inner sep=1.5pt] {}
        node [pos=0.5, circle, fill, inner sep=1pt] {}
        node [pos=0.55, circle, fill, inner sep=1pt] {}
        node [pos=0.6, circle, fill, inner sep=1pt] {}
        node [pos=0.7, circle, fill, inner sep=1.5pt] {}
        node [pos=0.8, circle, fill, inner sep=1.5pt] {}
        node [pos=0.925, circle, fill=black, inner sep=1.5pt] {};

\draw [thick,blue,xshift=70,yshift=70] (1.3,-0.7) .. controls (2,-1) and (3,-2) .. (4,-4)
        node [pos=0.3, circle, fill, inner sep=1.5pt] {}
        node [pos=0.4, circle, fill, inner sep=1.5pt] {}
        node [pos=0.5, circle, fill, inner sep=1pt] {}
        node [pos=0.55, circle, fill, inner sep=1pt] {}
        node [pos=0.6, circle, fill, inner sep=1pt] {}
        node [pos=0.7, circle, fill, inner sep=1.5pt] {}
        node [pos=0.8, circle, fill, inner sep=1.5pt] {}
        node [pos=0.915, circle, fill=black, inner sep=1.5pt] {};

\draw [white,xshift=55,yshift=55] (1.3,-0.7) .. controls (2,-1) and (3,-2) .. (4,-4)
node [pos=0.5, circle, fill=blue, inner sep=0.7pt] {}
        node [pos=0.55, circle, fill=blue, inner sep=0.7pt] {}
        node [pos=0.6, circle, fill=blue, inner sep=0.7pt] {}
        node [pos=0.915, circle, fill=black, inner sep=1pt] {};

\draw [white,xshift=50,yshift=50] (1.3,-0.7) .. controls (2,-1) and (3,-2) .. (4,-4)
node [pos=0.5, circle, fill=blue, inner sep=0.7pt] {}
        node [pos=0.55, circle, fill=blue, inner sep=0.7pt] {}
        node [pos=0.6, circle, fill=blue, inner sep=0.7pt] {}
        node [pos=0.915, circle, fill=black, inner sep=1pt] {};
                
 \draw [white,xshift=45,yshift=45] (1.3,-0.7) .. controls (2,-1) and (3,-2) .. (4,-4)
node [pos=0.5, circle, fill=blue, inner sep=0.7pt] {}
        node [pos=0.55, circle, fill=blue, inner sep=0.7pt] {}
        node [pos=0.6, circle, fill=blue, inner sep=0.7pt] {}
           node [pos=0.915, circle, fill=black, inner sep=1pt] {};
\end{tikzpicture}

\tiny{\textsc{The curve $X=C_1 \cup C_2 \cup [\mbox{$d-\alpha$ copies of $B_1$}]$}}
\end{center}

  \noindent The  marked  points on each copy of $B_1$ are $f_1(x_1),\ldots,f_1(x_s)$. One computes that $p_a(X)=g_1+g_2=g$. 

  Now we glue $B_1$ and $B_2$ along $f_1(p) \in B_1$ and $f_2(p)\in B_2$, and define $B:=B_1 \cup B_2$. Then $p_a(B)=b$, We can then define a map $f:X \to B$ by gluing $f_1$ and $f_2$ in the following way:
  \begin{eqnarray*}
    f|_{C_1} & = & f_1, \\
    f|_{C_2} & = & f_2, \\
    f \; & \mbox{on} & \mbox{each copy of $B_1$ is the identity},
  \end{eqnarray*}
  as shown in the following picture.

\begin{center}
\begin{tikzpicture}[scale=0.85]

\draw [thick,red] (3.5,-4.5) .. controls (5,-2) and (6,-1) .. (9,1)
node [pos=0.65, circle, fill, inner sep=1.5pt] {}
        node [pos=0.77, circle, fill, inner sep=1.5pt] {}
        node [pos=0.83, circle, fill, inner sep=1pt] {}
        node [pos=0.865, circle, fill, inner sep=1pt] {}
        node [pos=0.9, circle, fill, inner sep=1pt] {}
        node [pos=0.95, circle, fill, inner sep=1.5pt] {};

\node at (3.95,-4.25) {\small $p$};
\node at (4.75,-3.15) {\small $p_1$};
\node at (5.15,-2.6) {\small $p_2$};
\node at (6.77,-1.2) {\small $p_{d-\alpha}$};
\node[red] at (7.03,-0.85) {\small $x_{s+1}$};
\node[red] at (7.7,-0.4) {\small $x_{s+2}$};
\node[red] at (8.88,0.6) {\small $x_{n}$};
\node[red] at (8.7,1.2) {\small $C_2$};

  \draw [thick,blue,xshift=-2.2,yshift=-18] (0,0) .. controls (2,-1) and (3,-2) .. (4,-4)
        node [pos=0.3, circle, fill, inner sep=1.5pt] {}
        node [pos=0.4, circle, fill, inner sep=1.5pt] {}
        node [pos=0.5, circle, fill, inner sep=1pt] {}
        node [pos=0.55, circle, fill, inner sep=1pt] {}
        node [pos=0.6, circle, fill, inner sep=1pt] {}
        node [pos=0.7, circle, fill, inner sep=1.5pt] {}
        node [pos=0.8, circle, fill, inner sep=1.5pt] {}
        node [pos=0.925, circle, fill=black, inner sep=1.5pt] {};

\node[blue] at (3.2,-3.8) {\small $x_1$};        
\node[blue] at (2.9,-3.3) {\small $x_2$};        
\node[blue] at (1.6,-2.2) {\small $x_{s-1}$};        
\node[blue] at (1.2,-1.6) {\small $x_s$};        
\node[blue] at (0,-1) {\small $C_1$};        

\draw[blue,stealth-stealth] (1.5,0)  -- node[left]{\small $(d-\alpha)B_1$} (3.2,2.2);

\draw [thick,blue,xshift=19,yshift=12] (1.3,-0.7) .. controls (2,-1) and (3,-2) .. (4,-4)
        node [pos=0.3, circle, fill, inner sep=1.5pt] {}
        node [pos=0.4, circle, fill, inner sep=1.5pt] {}
        node [pos=0.5, circle, fill, inner sep=1pt] {}
        node [pos=0.55, circle, fill, inner sep=1pt] {}
        node [pos=0.6, circle, fill, inner sep=1pt] {}
        node [pos=0.7, circle, fill, inner sep=1.5pt] {}
        node [pos=0.8, circle, fill, inner sep=1.5pt] {}
        node [pos=0.915, circle, fill=black, inner sep=1.5pt] {};

\draw [thick,blue,xshift=30,yshift=29] (1.3,-0.7) .. controls (2,-1) and (3,-2) .. (4,-4)
        node [pos=0.3, circle, fill, inner sep=1.5pt] {}
        node [pos=0.4, circle, fill, inner sep=1.5pt] {}
        node [pos=0.5, circle, fill, inner sep=1pt] {}
        node [pos=0.55, circle, fill, inner sep=1pt] {}
        node [pos=0.6, circle, fill, inner sep=1pt] {}
        node [pos=0.7, circle, fill, inner sep=1.5pt] {}
        node [pos=0.8, circle, fill, inner sep=1.5pt] {}
        node [pos=0.925, circle, fill=black, inner sep=1.5pt] {};

\draw [thick,blue,xshift=70,yshift=70] (1.3,-0.7) .. controls (2,-1) and (3,-2) .. (4,-4)
        node [pos=0.3, circle, fill, inner sep=1.5pt] {}
        node [pos=0.4, circle, fill, inner sep=1.5pt] {}
        node [pos=0.5, circle, fill, inner sep=1pt] {}
        node [pos=0.55, circle, fill, inner sep=1pt] {}
        node [pos=0.6, circle, fill, inner sep=1pt] {}
        node [pos=0.7, circle, fill, inner sep=1.5pt] {}
        node [pos=0.8, circle, fill, inner sep=1.5pt] {}
        node [pos=0.915, circle, fill=black, inner sep=1.5pt] {};

\draw [white,xshift=55,yshift=55] (1.3,-0.7) .. controls (2,-1) and (3,-2) .. (4,-4)
node [pos=0.5, circle, fill=blue, inner sep=0.7pt] {}
        node [pos=0.55, circle, fill=blue, inner sep=0.7pt] {}
        node [pos=0.6, circle, fill=blue, inner sep=0.7pt] {}
        node [pos=0.915, circle, fill=black, inner sep=1pt] {};

\draw [white,xshift=50,yshift=50] (1.3,-0.7) .. controls (2,-1) and (3,-2) .. (4,-4)
node [pos=0.5, circle, fill=blue, inner sep=0.7pt] {}
        node [pos=0.55, circle, fill=blue, inner sep=0.7pt] {}
        node [pos=0.6, circle, fill=blue, inner sep=0.7pt] {}
        node [pos=0.915, circle, fill=black, inner sep=1pt] {};
                
 \draw [white,xshift=45,yshift=45] (1.3,-0.7) .. controls (2,-1) and (3,-2) .. (4,-4)
node [pos=0.5, circle, fill=blue, inner sep=0.7pt] {}
        node [pos=0.55, circle, fill=blue, inner sep=0.7pt] {}
        node [pos=0.6, circle, fill=blue, inner sep=0.7pt] {}
           node [pos=0.915, circle, fill=black, inner sep=1pt] {};


  \draw [thick,red] (3,-8.5) .. controls (4.5,-7) and (5.5,-6) .. (8.5,-4)
node [pos=0.65, circle, fill, inner sep=1.5pt] {}
        node [pos=0.77, circle, fill, inner sep=1.5pt] {}
        node [pos=0.83, circle, fill, inner sep=1pt] {}
        node [pos=0.865, circle, fill, inner sep=1pt] {}
        node [pos=0.9, circle, fill, inner sep=1pt] {}
        node [pos=0.95, circle, fill, inner sep=1.5pt] {};

\draw[blue] (0,-5) .. controls (2,-6) and (3,-7) .. (4,-8)
        node [pos=0.3, circle, fill, inner sep=1.5pt] {}
        node [pos=0.4, circle, fill, inner sep=1.5pt] {}
        node [pos=0.5, circle, fill, inner sep=1pt] {}
        node [pos=0.55, circle, fill, inner sep=1pt] {}
        node [pos=0.6, circle, fill, inner sep=1pt] {}
        node [pos=0.7, circle, fill, inner sep=1.5pt] {}
        node [pos=0.8, circle, fill, inner sep=1.5pt] {}
        node [pos=0.915, circle, fill=black, inner sep=1.5pt] {};

\draw[blue,thick,->] (0.5,-1.3) to[out=-110,in=450]node[left]{\small$f_1$} (0.2,-4.7);          

\draw[thick,->] (3.7,-4.5)  to[out=-90,in=450]node[left]{\small$f$} (3.8,-7);          
\draw[red,thick,->] (9,0.4)  to[out=-70,in=450]node[right]{\small$f_2$} (8,-4);          
\node[blue] at (0,-5.25) {\small $B_1$};        
\node[red] at (8.7,-4.15) {\small $B_2$};

\end{tikzpicture}

\tiny{\textsc{The curve $X$ mapping to $B=B_1 \cup B_2$}}
\end{center}

 \noindent  Then
  \[ [f: X \to B] \in \overline{\H}_{g \to b,d;e_1,\ldots,e_n},\]
      where $\overline{\H}_{g \to b,d;e_1,\ldots,e_n}$ is the compactification of 
      $\H_{g \to b,d;e_1,\ldots,e_n}$ as explained, e.g., in \cite[p. 3556-7]{ACV} or \cite[Def. 8]{CMR}.

      Now assume that $[f: X \to B]$ is a limit of curves in $\H^{\delta}_{g \to b,d;e_1,\ldots,e_n}$  for some admissible factor $\delta$. Then  $f$ factors into two {\it rational maps} $\overline{\phi}$ of degree  $\delta$  and $\overline{\psi}$ of degree  $\frac{d}{\delta}$  defined in codimension $1$:  
\begin{equation} \label{eq:fac}
  \xymatrix{
    X \ar[rr]^{f} \ar@{-->}[dr]_{\overline{\phi}} &  &  B  \\
   & X' \ar@{-->}[ur]_{\overline{\psi}} & 
  }
\end{equation}
where $X'$ is a reduced curve, and
  \begin{eqnarray*}
    \deg \overline{\phi}|_{X_1} & =\deg \overline{\phi}|_{C_2} & =\delta, \\
    \deg \overline{\psi}|_{\overline{\phi}(X_1)} & =\deg \overline{\psi}|_{\overline{\phi}(C_2)} & =\frac{d}{\delta}.
  \end{eqnarray*}
  Also note that $\overline{\phi}|_{X_1}$ and $\overline{\phi}|_{C_2}$ are everywhere defined, as the domains are smooth.

We first study the restriction of \eqref{eq:fac}  to $C_2$.

  Setting $C_2':=\overline{\phi}(C_2)$, the restriction of  \eqref{eq:fac} to $C_2$ yields a factorization:

\[
\xymatrix{
   C_2 \ar[rr]^{f|_{C_2}=f_2} \ar[dr]_{\overline{\phi}|_{C_2}} &  &  B_2  \\
   & C'_2 \ar@{-->}[ur]_{\overline{\psi}|_{C'_2}} &
   }
      \]
with $\deg \overline{\phi}|_{C_2}=\delta$ and $\deg \overline{\psi}|_{C'_2}=\frac{d}{\delta}$. 
 Now  $[f_2:C_2 \to B_2] \in \H^{\delta}_{g_2\to b,d;\alpha,e_{s+1}, \ldots,e_n}$.  Corollary \ref{cor:claimC} yields that $\alpha \neq d-1$, so that we already have a contradiction if we are in the middle case in  \eqref{eq:defalfa}. Corollary \ref{cor:claimC} also yields that if $\alpha =d-2$, then $\delta=2$. In this case $d$ and all $e_i$ are even (recall that for all $i$ we have either $e_i<\delta$ or $\delta|e_i$). Since  $s=2$ by Remark \ref{rem:minimals}, we have that  $e_1+e_2-2+d$  
 is even, contradicting the assumptions in the lower case  in   \eqref{eq:defalfa}. We conclude that we must be in the upper case in   \eqref{eq:defalfa}.

  We next  study the restriction to $X_1$ of \eqref{eq:fac}. 

    Set $C'_1:=\overline{\phi}(C_1)$, $X'_1:=\overline{\phi}(X_1)$ and consider
    the
    restriction of \eqref{eq:fac} to $X_1$ and $C_1$:

\[
\xymatrix{
C_1 \ar@/_1pc/[dddrr]_{\overline{\phi}|_{C_1}}\ar[rrrr]^{f_1} \ar@{^{(}->} [rd] & & & & B_1 \\
 &  X_1 \ar[rr]^{f|_{X_1}} \ar[dr]_{\overline{\phi}|_{X_1}} &  &  B_1 \ar@{=}[ru] & \\
 &   & X'_1 \ar@{-->}[ur]_{\overline{\psi}|_{X'_1}} & & \\
 &   & C'_1 \ar@{^{(}->}[u] \ar@{-->}@/_1pc/[uuurr]_{\overline{\psi}|_{C'_1}} & &  }
   \]

We separate the  treatment  into three cases:
    \begin{itemize}
    \item[(I)] $\deg \overline{\psi}|_{C'_1}=1$,
    \item[(II)] $\deg \overline{\phi}|_{C_1}=1$,
    \item[(III)] $\deg \overline{\phi}|_{C_1}>1$ and
    $\deg \overline{\psi}|_{C'_1}>1$.  
    \end{itemize}

    {\bf Case I:  $\deg \overline{\psi}|_{C'_1}=1$.} In this case $\overline{\phi}|_{C_1}=f_1$, whence  $\alpha=\deg \overline{\phi}|_{C_1}\leq
    \deg \overline{\phi}|_{X_1}=\delta$, contradicting \eqref{eq:A100}. \medskip
    
 {\bf Case II: $\deg \overline{\phi}|_{C_1}=1$.} In this case $\overline{\phi}|_{C_1}$ is the identity, and $\overline{\psi}|_{C'_1}=f_1$. Since $\deg \overline{\phi}|_{X_1}=\delta$, the only possibility is that $C_1=C'_1\cong B_1$ and
 $\overline{\phi}|_{X_1}$ maps a total of $(d-\alpha+1)$ copies of $B_1$ (including $C_1$) onto $\frac{d-\alpha+1}{\delta}$ copies of $B_1$, which is $X'_1$. Then $\overline{\psi}|_{X'_1}$ maps the $\frac{d-\alpha+1}{\delta}$ copies of $B_1$ to one $B_1$, whence $\frac{d}{\delta}=\deg \overline{\psi}|_{X'_1}=\frac{d-\alpha+1}{\delta}$. But then
 $\overline{\psi}_{X'_1}$ has degree one on each copy, whence $\deg \overline{\psi}|_{C'_1}= \deg f_1=\alpha=1$, 
 contradicting hypothesis \eqref{eq:A5} (recall that  all $e_i  \geq 2$). \medskip

{\bf Case III: $\deg \overline{\phi}|_{C_1}>1$ and
    $\deg \overline{\psi}|_{C'_1}>1$.} 
Set $\alpha_1:=\deg \overline{\phi}|_{C_1}$ and $\alpha_2:=\deg \overline{\phi}|_{C_1}$. Then $\alpha=\alpha_1\alpha_2$. It follows that $\overline{\phi}|_{X_1}$ restricted to the $d-\alpha$ copies of $B_1$ is also of degree $\alpha_1$, so its image is $\frac{d-\alpha}{\alpha_1}$ copies of $B_1$. Hence, $\alpha_1=\delta$.  Moreover, $\delta|\alpha$ and $\delta < \alpha$.

Now $f_1:C_1 \to B_1$ has degree $\alpha$ and has  $s+1\geq 3$  ramification points, one of order $\alpha$. We are therefore in the situation described in Corollary \ref{cor:claimD}, which yields  that $\displaystyle\sum_{i=1}^s(e_i-1) \geq \alpha$. But this contradicts the assumption that $\alpha=\displaystyle\sum_{i=1}^s(e_i-1)+1$ in the upper case in  \eqref{eq:defalfa}.

We have thus reached a contradiction in all cases, proving that $[f: X \to B]$ cannot be a limit of curves in $\H^{\delta}_{g \to b,d;e_1,\ldots,e_n}$  for any admissible factor  $\delta$. 

\subsection{The proof of Theorem \ref{thm:simple01} under assumption
  \eqref{eq:st2} for $b=1$ and of Theorem \ref{thm:simple2}}

 We assume $b \geq 1$. 
Write $b=b_1+b_2$, with 
\[
 \begin{cases}
  b_1,b_2>0, & \mbox{if $b \geq 2$}, \\
  b_1=1,b_2=0, & \mbox{if $b=1$},
\end{cases}
\]

Set
\[
\alpha:= \begin{cases}
  d-1, & \mbox{if $d$ is even}, \\
  d-2, & \mbox{if $d$ is odd},
\end{cases}
\]
so that $\alpha$ is always odd and $2 \leq \alpha<d$.

Define
\begin{eqnarray*}
  g_1 & := &  \frac{\alpha+1}{2}+\alpha(b_1-1), \\
  g_2 & := & d(b_2-1)+\frac{1}{2}\displaystyle\sum_{i=1}^n(e_i-1)+\frac{\alpha+1}{2}.
\end{eqnarray*}
The assumption that $\alpha$ is odd and the fact that  $b_1>0$, yield that $g_1$ is a positive integer. Similarly, $g_2$ is an integer,  and
if $b_2>0$, then also 
$g_2>0$. If $b_2=0$, then  $b=1$,  and assumption \eqref{eq:st2} says that 
$\displaystyle\sum_{i=1}^n(e_i-1)=2g-2 \geq d$, whence 
\begin{eqnarray*}
  g_2  & = & -d+\frac{1}{2}\displaystyle\sum_{i=1}^n(e_i-1)+\frac{\alpha+1}{2} \\
  & = & \frac{1}{2}\left[-2d+\displaystyle\sum_{i=1}^n(e_i-1)+\alpha+1\right] \\
  & \geq & \frac{1}{2}\left[-2d+d+d-1\right] \geq -\frac{1}{2},
\end{eqnarray*}
so that $g_2 \geq 0$ also in this case. 

Proposition \ref{prop:dimhur} now yields the existence of two covers
\[
  [f_1:C_1 \to B_1]   \in \H_{g_1\to b_1,\alpha;\alpha}\]
  and
  \[
    [f_2:C_2 \to B_2] \in  \H_{g_2\to b_2,d;\alpha,e_1,\ldots,e_n}.\] 
We  proceed as in the case $b=0$: Both covers have a  marked  ramification point of order $\alpha$. We glue the two curves $C_1$ and $C_2$ along this point, which we call $p$. We have
\[ f_2^*(f_2(p))=\alpha p+p_1+\cdots+p_{d-\alpha} \;\; \mbox{on} \;\; C_2.\]
Attach one copy of  $B_1$  to $C_2$ at each $p_i$, so that it intersects no other copy of $B_1$ and is disjoint from $C_1$. Define
\begin{eqnarray*}
  X_1 & := & C_1+\mbox{the $(d-\alpha)$ disjoint copies of $B_1$}, \\
  X   & := & X_1+C_2, 
\end{eqnarray*}
so that $X$ looks like in the case $b=0$, this, except that we now do not have any ramification point on $C_1$ except for $p$.
One computes that $p(X)=g_1+g_2+(d-\alpha)b_1=g$. 

Now we glue $B_1$ and $B_2$ along $f_1(p) \in B_1$ and $f_2(p)\in B_2$, and define $B:=B_1 \cup B_2$. Then $p(B)=b_1+b_2$. We can then define a map $f:X \to B$ by gluing $f_1$ and $f_2$ as in the case $b=0$:
  \begin{eqnarray*}
    f|_{C_1} & = & f_1, \\
    f|_{C_2} & = & f_2, \\
    f \; & \mbox{on} & \mbox{each copy of $B_1$ is the identity}.
  \end{eqnarray*}
Then
  \[ [f: X \to B] \in \overline{\H}_{g \to b,d;e_1,\ldots,e_n},\]
      where $\overline{\H}_{g \to b,d;e_1,\ldots,e_n}$ is the compactification of 
      $\H_{g \to b,d;e_1,\ldots,e_n}$ as explained, e.g., in \cite[p. 3556-7]{ACV} or \cite[Def. 8]{CMR}.

      Now assume that $[f: X \to B]$ is a limit of curves in
       $\H^{\delta}_{g \to b,d;e_1,\ldots,e_n}$ for some admissible factor $\delta$.  Then  $f$ factors into two {\it rational maps} $\overline{\phi}$ of degree  $\delta$  and $\overline{\psi}$ of degree  $\frac{d}{\delta}$   defined in codimension $1$:  
\begin{equation} \label{eq:fac2}
  \xymatrix{
    X \ar[rr]^{f} \ar@{-->}[dr]_{\overline{\phi}} &  &  B  \\
   & X' \ar@{-->}[ur]_{\overline{\psi}} & 
  }
\end{equation}
where $X'$ is a reduced curve, and
  \begin{eqnarray*}
    \deg \overline{\phi}|_{X_1} & =\deg \overline{\phi}|_{C_2} & =\delta, \\
    \deg \overline{\psi}|_{\overline{\phi}(X_1)} & =\deg \overline{\psi}|_{\overline{\phi}(C_2)} & =\frac{d}{\delta}.
  \end{eqnarray*}
  Also note that $\overline{\phi}|_{X_1}$ and $\overline{\phi}|_{C_2}$ are everywhere defined, as the domains are smooth.

  The map $f|_{C_2}$ is of degree $d$ with a ramification point of order $\alpha$. Corollary \ref{cor:claimC} says that a factorization $f|_{C_2}= \overline{\psi} \circ \overline{\phi}|_{C_2}$ cannot exist if $\alpha=d-1$, and can only exist when  $\delta=2$, if $\alpha=d-2$. In the latter case $d$ is even (as it is divisible by  $\delta$), but this contradicts the definition of $\alpha$. Thus, a factorization as above cannot exist. This is the desired contradiction, finishing the proofs of Theorems \ref{thm:simple01} and Theorem \ref{thm:simple2}.

  \section{Examples}\label{sec:ex}

In this section we first give some examples of admissible factors and we check how condition $(*)_b$  reads,  explaining the missing cases. Then we give examples highlighting why condition $(*)_b$ cannot be avoided completely. Finally, we give examples admitting components of simple and non-simple type.

  \subsection{Easy examples of admissible factors and condition $(*)_b$}\label{ex-admfac}

\begin{example} \label{ex:d=4}
  Let $d=4$. Then the only possible admissible factor of $(g,b,d;\e)$ is $2$. Setting $a_j:=\#\{i\;|\;e_i=j\}$, for $j \in \{2,3,4\}$, we see that $2$ is an admissible factor if and only if $a_3=0$, $a_4$ is even and, if $b =0$, also $a_4 \geq 2$.  By Theorem \ref{thm:composed}, these are precisely the cases where
$\H_{g \to b,4;\e}$ admits components of non--simple type.  
  Note that the Riemann-Hurwitz formula yields that also $r+a_2$ is even.
 
Condition $(*)_0$ reads $r+a_2 \neq 0$, and $(*)_1$ reads $r+a_2+a_4 \neq 0$. The only cases left open by Theorem \ref{thm:simple01} are therefore the cases where $b=0$ and all $e_i=4$ and where $b=1$ and $n=r=0$.  In all other cases, Theorems \ref{thm:simple2}--\ref{thm:simple01} yield the existence of components of simple type. 
\end{example}

\begin{example} \label{ex:d=6}
  Let $d=6$. Then the only possible admissible factors of $(g,b,d;\e)$ are $2$ and $3$. Set $a_j:=\#\{i\;|\;e_i=j\}$, for $j \in \{2,3,4,5,6\}$. We distinguish between four cases:
  \begin{itemize}
  \item[(i)] The only admissible factor is $2$. This happens if and only if $a_3=a_5=0$, $a_4$ is even and $a_4 \neq 0$, and if $b =0$, also $a_4+2a_6 \geq 4$. Note that the Riemann-Hurwitz formula yields that also $r+a_2+a_6$ is even.
  \item[(ii)] The only admissible factor is $3$. This happens  if and only if $a_4=a_5=0$, $a_3 \neq 0$, $a_6$ is even, and if $b =0$, also $a_6 \geq 2$. Note that the Riemann-Hurwitz formula yields that also $r+a_2$ is even.
  \item[(iii)] Both $2$ and $3$ are admissible factors. This happens  if and only if $a_3=a_4=a_5=0$,  $a_6$ is even, and if $b =0$, also $a_6 \geq 2$. Note that the Riemann-Hurwitz formula yields that also $r+a_2$ is even.
 \item[(iv)] There are no admissible factors.  This happens  if and only if $a_3,a_4 \neq 0$ or $a_5 \neq 0$.   
  \end{itemize}
  By Theorem \ref{thm:composed}, the cases (i)-(iii) are precisely the cases where
$\H_{g \to b,6;\e}$ admits components of non--simple type. By Theorem \ref{thm:simple2}, when $b \geq 2$ there also exist components of simple type. When $b=0,1$,  we now  express  condition $(*)_b$ in the cases (i)-(iii):
  \begin{itemize}
\item[(i)] In this case one can easily check that conditions $(*)_0$ and $(*)_1$ are always satisfied. Therefore, there are no cases left open by Theorem \ref{thm:simple01} which means that $\H_{g \to b,6;\e}$ also has components of simple type in this case, by Theorem \ref{thm:simple01}.  
\item[(ii)] In this case $(*)_0$ reads $r+a_2+a_3 \geq 2$ and  $(*)_1$ reads
  $r+a_2+a_3 +a_6 \geq 2$. Recalling that $a_3 \neq 0$ and $r+a_2$ and $a_6$ are even, and additionally $a_6 \geq 2$ if $b=0$, we see that the only cases left open by Theorem \ref{thm:simple01} are the cases where $b=0$, $r=0$ and $\e=(3,6^{n-1})$ (with $n \geq 3$ and odd), and
 where $b=1$, $r=0$, $n=1$ and $\e=(3)$.  In all other cases, Theorem \ref{thm:simple01} yield the existence of components of simple type. 
\item[(iii)] In this case $(*)_0$ reads $r+a_2 \geq 4$ and  $(*)_1$ reads
 $r+a_2 \geq 4$ or $a_6 \geq 2$.  Recalling that both $r+a_2$ and $a_6$ are even,  and additionally $a_6 \geq 2$ if $b=0$, we see that the only cases left open by Theorem \ref{thm:simple01} are the cases where $b=0$ and
  \begin{eqnarray*} r=0,2, \; \e=6^n \; \mbox{($n \geq 2$ even)}, \;\; \mbox{or} \;\; r=0, \; \e=(2^2,6^{n-2}) \;  \mbox{($n \geq 4$ even)}, \\
\;\; \mbox{or} \;\; r=1, \; \e=(2,6^{n-1}) \; 
\mbox{($n$ odd)},
  \end{eqnarray*}
  and where $b=1$ and
  \[ (r,n)=(0,0),(2,0), \;\; \mbox{or} \;\; (r,n)=(0,2), \; \e=(2,2), \;\; \mbox{or} \;\; (r,n)=(1,1),\;\e=(2). \]
   In all other cases, Theorem \ref{thm:simple01} yields the existence of a component of simple type. 
  \end{itemize}

\end{example}

\begin{example} \label{ex:d=2tot}
  Let us consider the case with two total marked ramification points, that is, $n=2$ and $\e=(d,d)$. Then the remaining unmarked ramification points are $r=2(g-db)$. In this case  the admissible factors of $(g,b,d;\e)$ are precisely the proper nontrivial divisors of $d$.  For each such $\delta$, there exists a component of non--simple type   of $\H_{g\to b, d, (d,d)}$ by Proposition \ref {prop:composed}. For a general point $[\pi: C\to  B]$ in each such component there is a diagram
  \begin{equation*}
  \label{eq:compos}
  \xymatrix{
    C \ar[rr]^{\pi} \ar[dr]_{\phi} &  &   B  \\
   & C' \ar[ur]_{\psi} & 
}\end{equation*}
where $C'$ has genus $g'=\frac{db}{\delta}$, the map $\phi$ has degree $\delta$, with two assigned total ramification points $x,y$ and $r$ further  simple ramification points, and $\psi$ is of degree $d/\delta$ totally ramified at $\phi(x)$ and $\phi(y)$ and with no further ramification. 

The  largest admissible factor $\Delta$ of $(g,b,d;\e)$ is the largest proper  divisor of $d$. 
Condition $(*)_0$ reads $r=2g \geq \Delta$  whereas 
condition $(*)_1$ is always satisfied. 
Thus, the cases left open by Theorem \ref{thm:simple01} are the cases where $b=0$ and $2g<\Delta$.  In all other cases, Theorem \ref{thm:simple01} yields the existence of a component of simple type.

 The situation in the  case $b=0$ is on the other hand known: for $g \geq 1$, the  components of $\H_{g\to 0, d, (d,d)}$ correspond exactly to the divisors of $d$ (including $d$, which gives the unique component of simple type), by  \cite[App.~A]{Agg}, as conjectured in \cite[Rem. 8]{LLV}. For $g=0$, see Example \ref{ex:rat} right below. 
\end{example}

 \subsection{The hypotheses $(*)_b$ cannot be avoided completely} Look in fact at the following examples.  
  
  \begin{example}\label{ex:rat} Consider $\H_{0  \to 0,d; (d,d)}$, i.e., maps $\pi: \mathbb P^1\to \mathbb P^1$ of degree $d$ totally ramified at two points.  According to Proposition \ref {prop:dimhur}, one has $\dim(\H_{0  \to 0,d; (d,d)})=0$. Indeed the two ramification points can be put at $0$ and $\infty$, and the unique (up to isomorphism) map $\pi_d$ in $\H_{0  \to 0,d; (d,d)}$  sends $x$ to $x^d$. If $d$ is prime, then of course $\pi_d$ is simple. Otherwise, if $\delta$ is a proper divisor of $d$, then $\pi$ is composed with $\pi_\delta$ and $\pi_{\frac d \delta}$.  Hence $\H_{0  \to 0,d; (d,d)}$ does not have  any  component of non--simple type   if $d$ is not prime. 
  \end{example} 
  
  \begin{example}\label{ex:ell} Consider $\H_{1  \to 1,d, (0)}$, i.e., unramified maps $\pi: E\to E'$ of degree $d$, with $E,E'$ smooth irreducible curves of genus 1.  According to Proposition \ref {prop:dimhur}, one has $\dim \H_{1  \to 1,d; (0)}=1$. Indeed, given any elliptic curve $E$ (depending on 1 modulus) and given a torsion point $\eta\in E$ of order $d$, there is the morphism $f_{E,\eta}: x \in E\to x+\eta \in E$ of order $d$, which is an action of $\mathbb Z_d$ on $E$. Accordingly we have a quotient morphism $\pi_{E,\eta}: E\to E'$, where $E'$ is the quotient, and $\pi_{E,\eta}$ is not ramified. If  $d$ is prime, then any morphism $\pi_{E,\eta}$ is simple. Otherwise, if $\delta$ is a proper divisor of $d$ then $\pi_{E,\eta}$ is composed with $\pi_{E,\delta \eta}$.  Hence $\H_{1  \to 1,d, (0)}$ does not have  any   component of non--simple type  if $d$ is not prime. 
  \end{example} 
  
  \subsection{ Examples  with  components of both simple and non--simple type}  The following easy example shows instead a situation in which we have components of $\H_{g \to b,d;\e}$ that are  of simple and of non--simple type.
  
  \begin{example} \label{ex:bot} Let us consider $\H_{g \to 0,4; (4,4)}$, with $g\geq 1$.  According to Proposition \ref {prop:dimhur}, one has $\dim \H_{g  \to 0,4; (4,4)}=2g-1$.  One  component of non--simple type  of $\H_{g  \to 0,4; (4,4)}$ is  obtained  in the following way. Take any hyperelliptic curve $C$ (depending on $2g-1$ moduli). This has the hyperelliptic degree 2 morphism $\phi: C\to \mathbb P^1$ with $2g+2\geq 4$ branch points. Compose it with a degree 2 morphism $\psi: \mathbb P^1\to \mathbb P^1$ ramified at two of the $2g+2$ branch points of $\phi$. The composition belongs to $\H_{g  \to 0,4; (4,4)}$  and depends on $2g-1$ parameters, so when $C$ varies in the hyperelliptic locus, we have a whole component of $\H_{g  \to 0,4; (4,4)}$. Actually, according to Proposition \ref {prop:composed}, this is the only  component of non--simple type  of $\H_{g  \to 0,4; (4,4)}$. 
 This also follows from \cite[App.~A]{Agg}.   
  
    On the other hand,  since the largest admissible factor of 
$(g,0,4,(4,4))$ is $2$ and the condition $2g \geq 2$ is satisfied by the assumption $g \geq 1$, then  according to Theorem \ref {thm:simple01},
there is a
component of simple type  of  the space  $\H_{g  \to 0,4; (4,4)}$  (cf. Example \ref{ex:d=2tot}).   Actually, by \cite[App.~A]{Agg}, there is a unique such component. 

 By the above,  for $g \geq 3$,  the existence of a component of simple type  would follow from    the fact  that there is a point $[\pi: C\to \mathbb P^1]$ in $\H_{g  \to 0,4; (4,4)}$ with $C$ non--hyperelliptic. This is not immediate in general. However it is easy  to verify this directly  for low values of $g$,  and at the same time prove the existence of a {\it unique  and unirational}  component of simple type.  For example for $g=3$, using Bertini's theorem, one can find a smooth plane quartic with two tangents with contact of order 4 at two distinct points,  and one easily verifies that the family of such curves is irreducible  and rational.  
  
  For $4\leq g\leq 5$ a similar argument works, and we will briefly explain it.
  
   For $g=4$ to describe the points $[\pi: C\to \mathbb P^1]$ in $\H_{g  \to 0,4; (4,4)}$ with $C$ non--hyperelliptic, one can proceed as follows.  Given a non--hyperelliptic curve $C$ of genus 4, and a $g^1_4$ on $C$, choose a base point free $g^1_3$ on $C$. Using the $g^1_4$ and the $g^1_3$, we can map  $C$  birationally  to $\mathbb P^1\times \mathbb P^1$, with image $\Gamma$, of type $(3,4)$ on $\mathbb P^1\times \mathbb P^1$. We have $\dim (|\Gamma|)=19$ and $p_a(\Gamma)=6$.   Note that the rulings of type $(1,0)$ cut out on $\Gamma$ a linear series that pulls back to the $g^1_4$ on $C$ and by definition this $g^1_4$ has two points of total ramification. 
   
 Choose  two general  points $p,q$ on $\mathbb P^1\times \mathbb P^1$. The linear system $\mathcal L_{p,q}$ of curves of type $(3,4)$ with double points at $p,q$ has dimension at least  $13$. Next fix two more general points $x,x'$ on $\mathbb P^1\times \mathbb P^1$, and denote by $F,F'$ the two rulings of type $(1,0)$ containing $x$ and $x'$ respectively. Then impose that the curves in $\mathcal L_{p,q}$ have intersection multiplicity at least $4$ with $F$ and $F'$ at $x$ and $x'$ respectively. In this way one gets a linear system $\mathcal L_{p,q,x,x'}$ of dimension at least $5$. Next we claim that: 
 
 \begin{eqnarray}\label{eq:haaland}
 & \dim (\mathcal L_{p,q,x,x'})=5,\,\, \text{and its general element is irreducible} & \\
  \nonumber & \text{with nodes at $p,q$
  and  smooth at $x,x'$.} &
 \end{eqnarray}
 Indeed, 
 by imposing to the elements of $\mathcal L_{p,q,x,x'}$ to contain two general points of $F,F'$, we split off these two fibres from $\mathcal L_{p,q,x,x'}$, and the residual system $\mathcal L'_{p,q,x,x'}$ has dimension at least 3 and consists of curves of type $(1,4)$ with double points at $p,q$. Then the two fibres of type $(0,1)$ through $p,q$ split off the system $\mathcal L'_{p,q,x,x'}$ and the residual system consists of curves of type $(1,2)$ containing $p,q$. This system has exactly dimension 3, and the irreducibility of its general member easily follows by Bertini's theorem. Then
 \eqref {eq:haaland} immediately follows.

   This proves that the variety $\mathcal V$ of curves of type $(3,4)$ on $\mathbb P^1\times \mathbb P^1$, with genus 4, and  such that the rulings of type $(1,0)$ cut out  a linear series that has two total ramification points, is nonemptyand dominates the simple locus of $\H_{4  \to 0,4; (4,4)}$.

   By results of Caporaso and Harris \cite{CH},  worked out in a more general setting in  \cite{De},  the variety $\mathcal V$ is equidimensional of dimension 13 and the general element in any irreducible component of $\mathcal V$ has two nodes $p$ and $q$ and is smooth and totally tangent to two rulings of type $(1,0)$ at two points $x,x'$. 
   
   Now we claim that $\mathcal V$ is irreducible. Indeed, given a general element $\Gamma$ in an irreducible component of $\mathcal V$, we can associate to $\Gamma$ the four points $p,q$ (the two nodes) and $x,x'$ (the two tangency points). In this way we have a map 
   $$\mathcal V\dasharrow {\rm Sym}^2(\mathbb P^1\times \mathbb P^1)^2$$
 and, with the same argument we made above to prove \eqref{eq:haaland}, one sees that  the fibre over any quadruple of points  $p,q,x,x'$ in the image of the map is the linear system $\mathcal L_{p,q,x,x'}$ of dimension 5. Since $\mathcal V$ is equidimensional of dimension 13, this proves that any component of $\mathcal V$ dominates  ${\rm Sym}^2(\mathbb P^1\times \mathbb P^1)$, and since the general fibre is irreducible, this proves the claim that $\mathcal V$ is irreducible. 
 It is clearly rational.

 As a consequence, there  is  only one component of $\H_{4  \to 0,4; (4,4)}$ of simple type  and it is unirational.

 The situation for $g=5$ is similar. Take a  non--hyperelliptic curve $C$ of genus 4, and a $g^1_4$ on $C$.  Choose another base point free $g^1_4$ on $C$ (that varies with one parameter). Using the two $g^1_4$s, we can map  $C$ to $\mathbb P^1\times \mathbb P^1$ with image $\Gamma$, a birational curve to $C$ of type $(4,4)$ on $\mathbb P^1\times \mathbb P^1$. We have $\dim (|\Gamma|)=24$ and $p_a(\Gamma)=9$, so that $\Gamma$  has singularities that drop the genus by 4.  
 
 Choose now four general  points $p_1,\ldots, p_4$ on $\mathbb P^1\times \mathbb P^1$. The linear system $\mathcal L_{p_1,\ldots, p_4}$ of curves of type $(4,4)$ with double points at $p_1,\ldots, p_4$ has dimension $12$. Again fix two more general points $x,x'$ on $\mathbb P^1\times \mathbb P^1$, and denote by $F,F'$ the two rulings containing $x$ and $x'$ respectively, such that $F\cdot \Gamma=F'\cdot \Gamma=4$. Then impose that the curves in $\mathcal L_{p_1,\ldots, p_4}$ have intersection multiplicity $4$ with $F$ and $F'$ at $x$ and $x'$ respectively.  As in the case $g=4$, one proves that the linear system $\mathcal L_{p_1,\ldots, p_4,x,x'}$ has dimension $4$ and its general element is irreducible, with  nodes at $p_1,\ldots, p_4$ and  smooth at $x,x'$. 
 
  This proves again that the  variety $\mathcal V$ of curves of type $(4,4)$ on $\mathbb P^1\times \mathbb P^1$, with genus 5, and  such that the rulings of type $(1,0)$ cut out  a linear series that has two total ramification points, is nonempty and dominates the simple locus of $\H_{5  \to 0,4; (4,4)}$. 
  
  Again by \cite{CH, De} the variety $\mathcal V$ is equidimensional of dimension 16 and the general element in any irreducible component of $\mathcal V$ has four nodes $p_1,\ldots, p_4$ and  is  totally tangent to two rulings of type $(1,0)$ at two smooth points $x,x'$. 
  
  Then the irreducibility of  $\mathcal V$ is proved similarly as in the $g=4$ case, and this implies that there  is  only one component of $\H_{5  \to 0,4; (4,4)}$ of simple type  and that it is unirational. 
 
  The situation becomes more complicated  for $g>5$. 
\end{example}  
  
 The next example is related to  Example \ref{ex:d=2tot} and shows that Theorem \ref{thm:simple01} can be improved in some cases.
  
  \begin{example}\label{ex:sim}
Let us consider $\mathcal H_{1\to 0, d, (d,d)}$,  which  has dimension 1. Any proper factor $\delta$ of $d$ is an admissible factor of $(1,0,d;(d,d))$, and accordingly, for each such $\delta$, we have a
 component of non--simple type   of $\mathcal H_{1\to 0, d, (d,d)}$ (see Proposition \ref {prop:composed}). Any such component can be described in the following way. Its general point $[\pi: C\to \mathbb P^1]$ is such that there is a diagram
  \begin{equation*}
  \label{eq:compos}
  \xymatrix{
    C \ar[rr]^{\pi} \ar[dr]_{\phi} &  &  \mathbb P^1  \\
   & \mathbb P^1 \ar[ur]_{\psi} & 
}\end{equation*}
where $\phi$ has degree $\delta$, with two assigned total ramification points $x,y$ and two further  simple ramification points, and $\psi$ is of degree $d/\delta$ totally ramified at $\phi(x)$ and $\phi(y)$.

 Theorem \ref{thm:simple01} does not guarantee the existence of a 
 component of simple type of $\mathcal H_{1\to 0, d, (d,d)}$  when $d>4$,  since the condition $2g \geq \Delta$ is not satisfied (cf. Example \ref{ex:d=2tot}). However,  the existence of such a component follows from  \cite[App.~A]{Agg}, but one can also prove this  directly.   Indeed, given a general elliptic curve $C$, choose two distinct points $x,y\in C$ such that $x-y$ is a point of order $d$ but not of order smaller than $d$. Then $dx$ is linearly equivalent to $dy$ and therefore there is a $g^1_d$ on $C$ totally ramified at $x$ and $y$. By the above considerations, the corresponding  point $[\pi: C\to \mathbb P^1]$ is simple.   \end{example}

  \end{document}